\def\BState{\State\hskip-\ALG@thistlm}
\newtheorem{theorem}{\bf Theorem}[section]
\newtheorem{corollary}{\bf Corollary}[section]
\newtheorem{definition}{\bf Definition}[section]
\newtheorem*{thmain}{\bf Main Theorem}
\newtheorem{remark}{\bf Remark}[section]
\newtheorem{example}{\bf Example}[section]
\newtheorem{simpexamples}{\bf Simplest examples}[section]
\newtheorem{proposition}{\bf Proposition}[section]
\newcommand{\dist}{\operatorname{dist}}
\newcommand{\erre}{\mathbb{R}}
\newcommand{\hess}{\operatorname{Hess}}
\newcommand{\evA}{\operatorname{evA}}
\newcommand{\eva}{\operatorname{eva}}
\newcommand{\area}{\operatorname{A}}
\begin{document}

\title{A lower Bound for the Area of Plateau Foams}
\author{
V. Gimeno$^{1}$, S. Markvorsen$^{2}$, and\\ J. M. Sotoca$^{3}$}

\address{$^{1}$Departament de Matem\`{a}tiques-IMAC, Universitat Jaume I, Castell\'o,
Spain. \\
$^{2}$DTU Compute, Technical University of Denmark.\\
$^{3}$Departamento de Lenguajes y Sistemas Inform\'aticos-INIT, Universitat Jaume I, Castell\'o,
Spain.}



\keywords{foams, bubbles, density, pressure, comparison geometry}

\begin{abstract}
Real foams can be viewed as a geometrically well-organized dispersion of more or less spherical bubbles in a liquid. When the foam is so drained that the liquid content  significantly decreases, the bubbles become polyhedral-like and the foam can be viewed now as a network of thin liquid films intersecting each other at the Plateau borders according to the celebrated Plateau's laws.

In this paper we estimate from below the surface area of a spherically bounded piece of a foam. Our  main tool is a new version of the divergence theorem which is adapted to the specific geometry of a foam with special attention to its classical Plateau singularities.

As a benchmark application of our results we obtain lower bounds for the fundamental cell of a Kelvin foam, lower bounds for the so-called cost function, and for the difference of the pressures appearing in minimal periodic foams. Moreover, we provide an algorithm whose input is a set of isolated points in space and whose output is the best lower bound estimate for the area of a foam that contains the given set as its vertex set.
\end{abstract}











\maketitle

\section{Introduction} \label{secIntro}
A foam is a cell decomposition of the Euclidean $3$-space $\erre^3$ into a finite or infinite number of properly embedded, connected 3D chambers. Here, the chambers are not assumed to be either compact or homeomorphic to a ball in $\mathbb{R}^{3}$, but to qualify as a foam we will assume that they comply with the famous Plateau rules.

The Plateau rules are the following: Firstly, the interfaces between neighbouring chambers all have constant mean curvature; secondly the interfaces meet in threes (at equal $2\pi/3$ angles) along smooth edges; and thirdly the edges always meet four at a time in isolated points, where the angle between any pair of edges is precisely $\arccos(-1/3)$.

A  foam is clearly represented by its $1$-, $2$-, and $3$-skeleton, i.e., the union of its faces, edges, and vertices. We will typically denote this union by $F$.
The constant mean curvature of the surface of each face in a foam is proportional to the pressure difference between the two cells meeting along the face. Every foam is organized around two angles: Every vertex treats the foam like the center vertex that locates the $6$ inner wings in a regular tetrahedron, and every edge organizes three of these wings to have equal angles $2\pi/3$ between them, see Figure \ref{tetraball}.

Plateau found these rules in the nineteenth century when he was studying the geometry of assembled bubbles in equilibrium. From then on, foams were largely studied because of their amazing physical and mathematical properties related e.g. to packings and isoperimetric problems, see \cite{Taylor, H, M}.

The goal of this paper is to obtain lower bounds for the area of 'spherical scoops' of a foam. Locally the area of the tetrahedral linear foam (totally geodesic) inside a sufficiently small ball of radius $R$ centered at a foam vertex is similar to  $\pi \theta_{v}\,R^{2}$, where $\theta_{v} = \frac{3}{\pi}\arccos(-1/3)$ is the density of the tetrahedral $6$-wing construction at the vertex, see figure \ref{tetraball}. This is the estimated area that we will apply for comparison at every vertex of the given foam in order to achieve an effective lower bound for the total area of a foam.

\begin{figure}
\centerline{
\includegraphics[height=30mm]{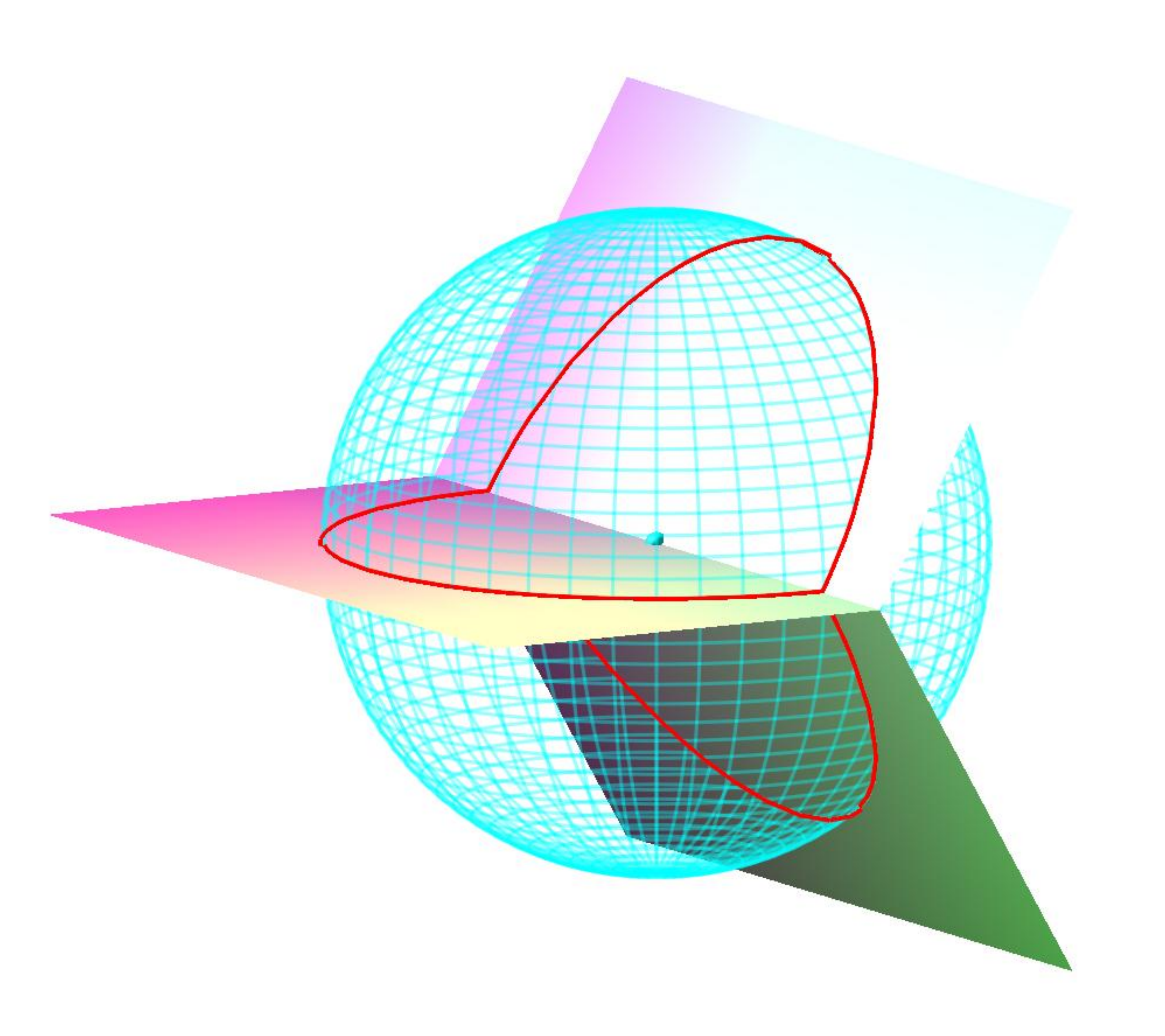} \quad
\includegraphics[height=30mm]{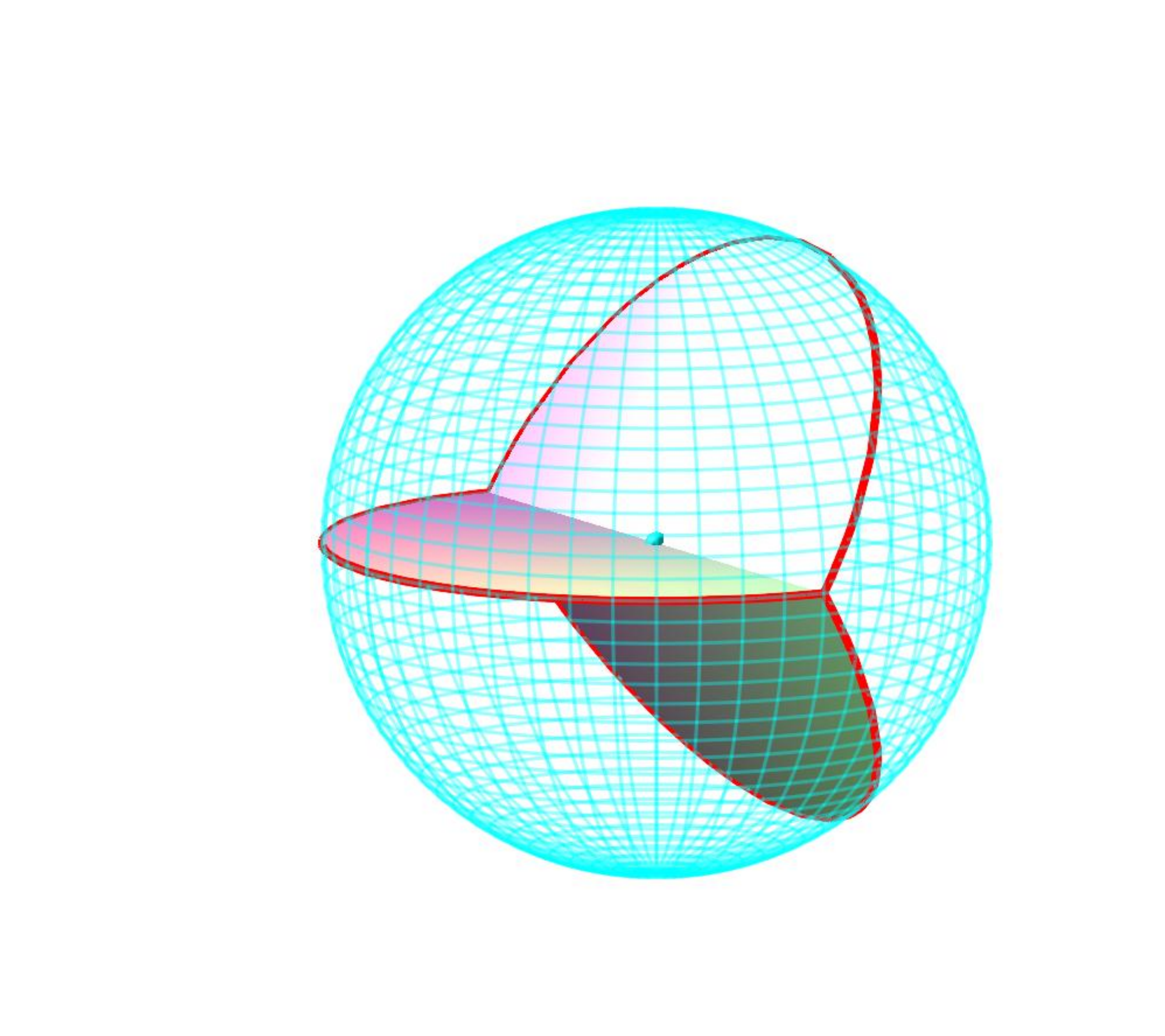} \quad
\includegraphics[height=30mm]{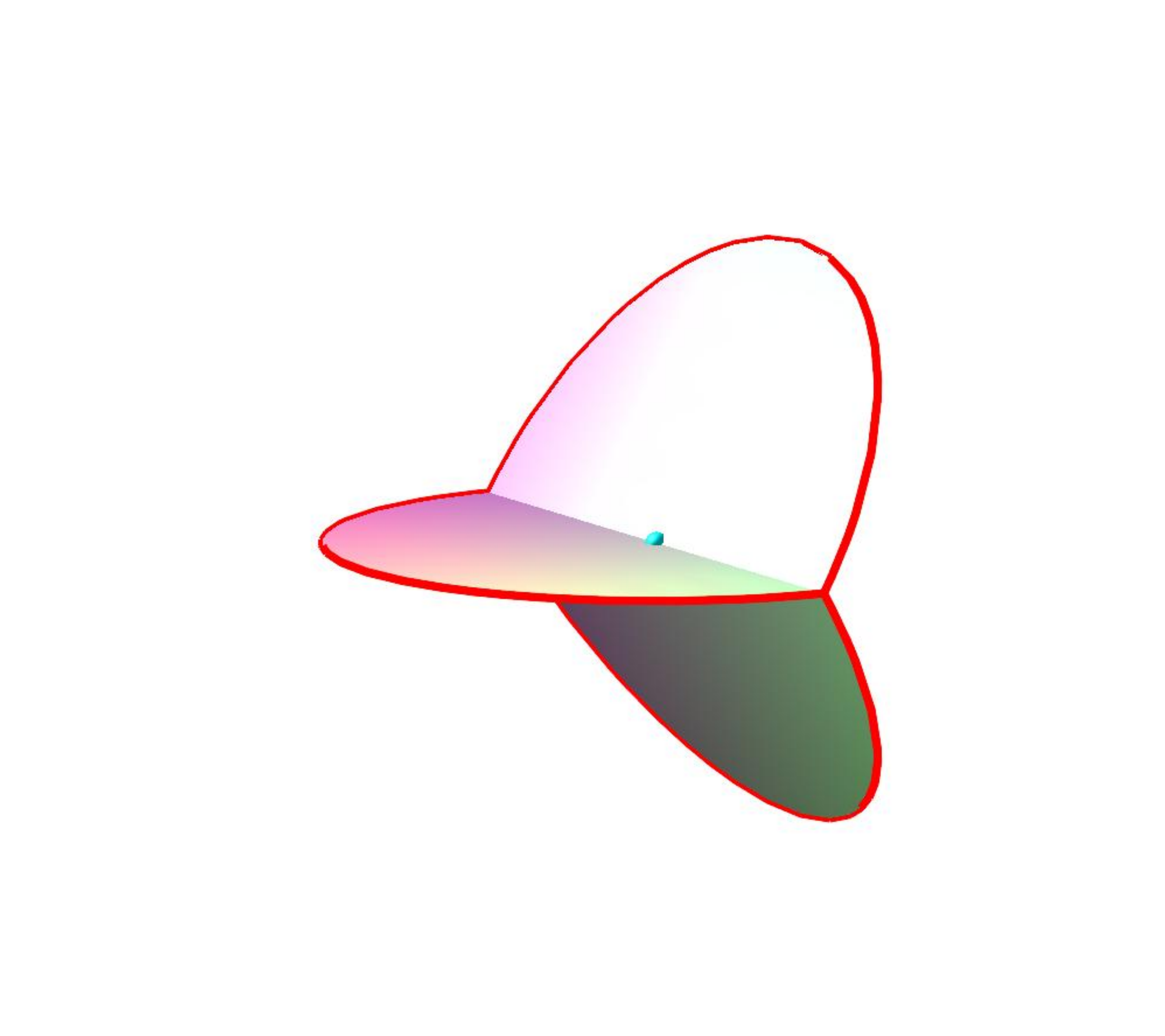}}
\centerline{
\includegraphics[height=32mm]{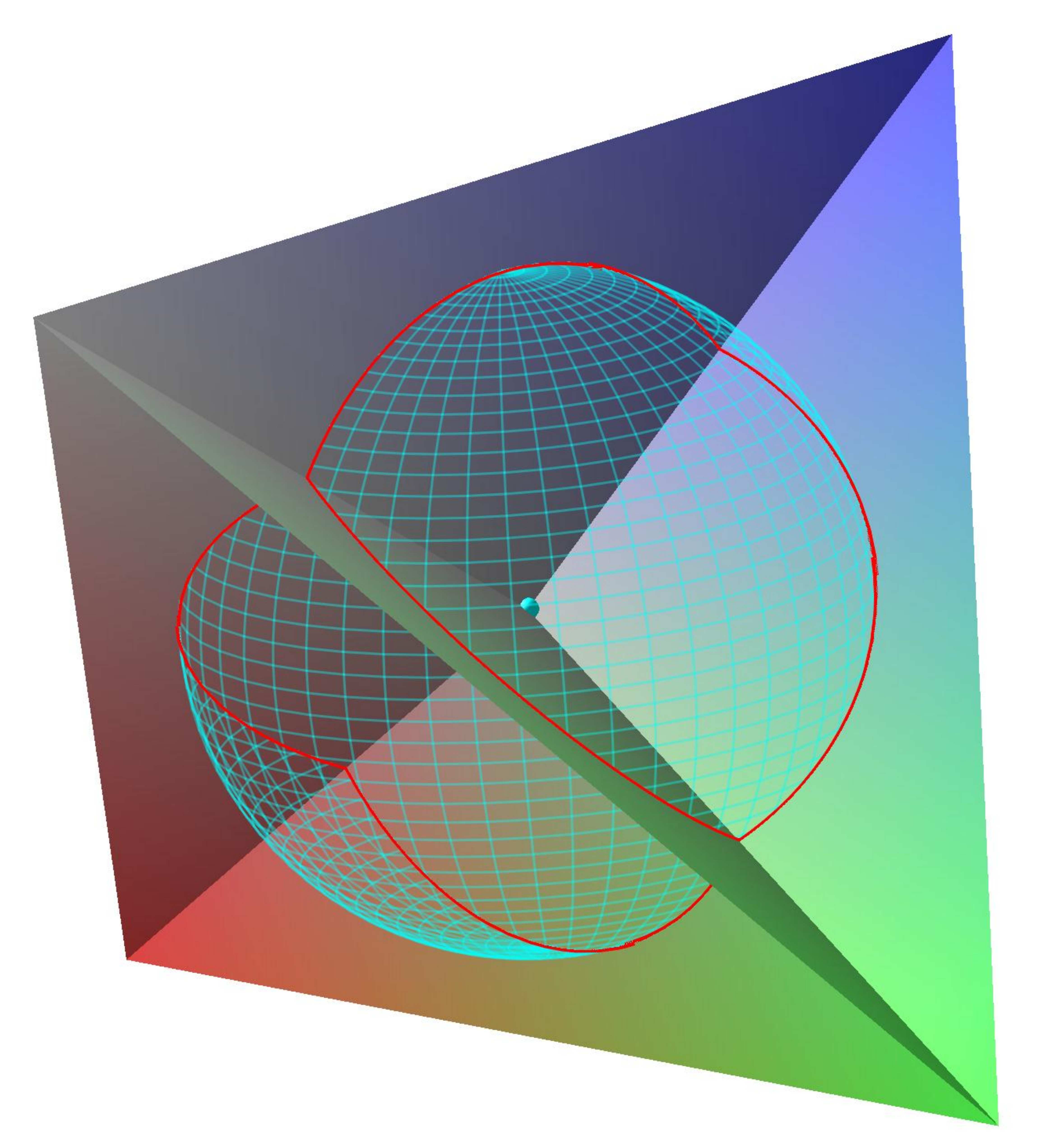} \qquad \qquad
\includegraphics[height=20mm]{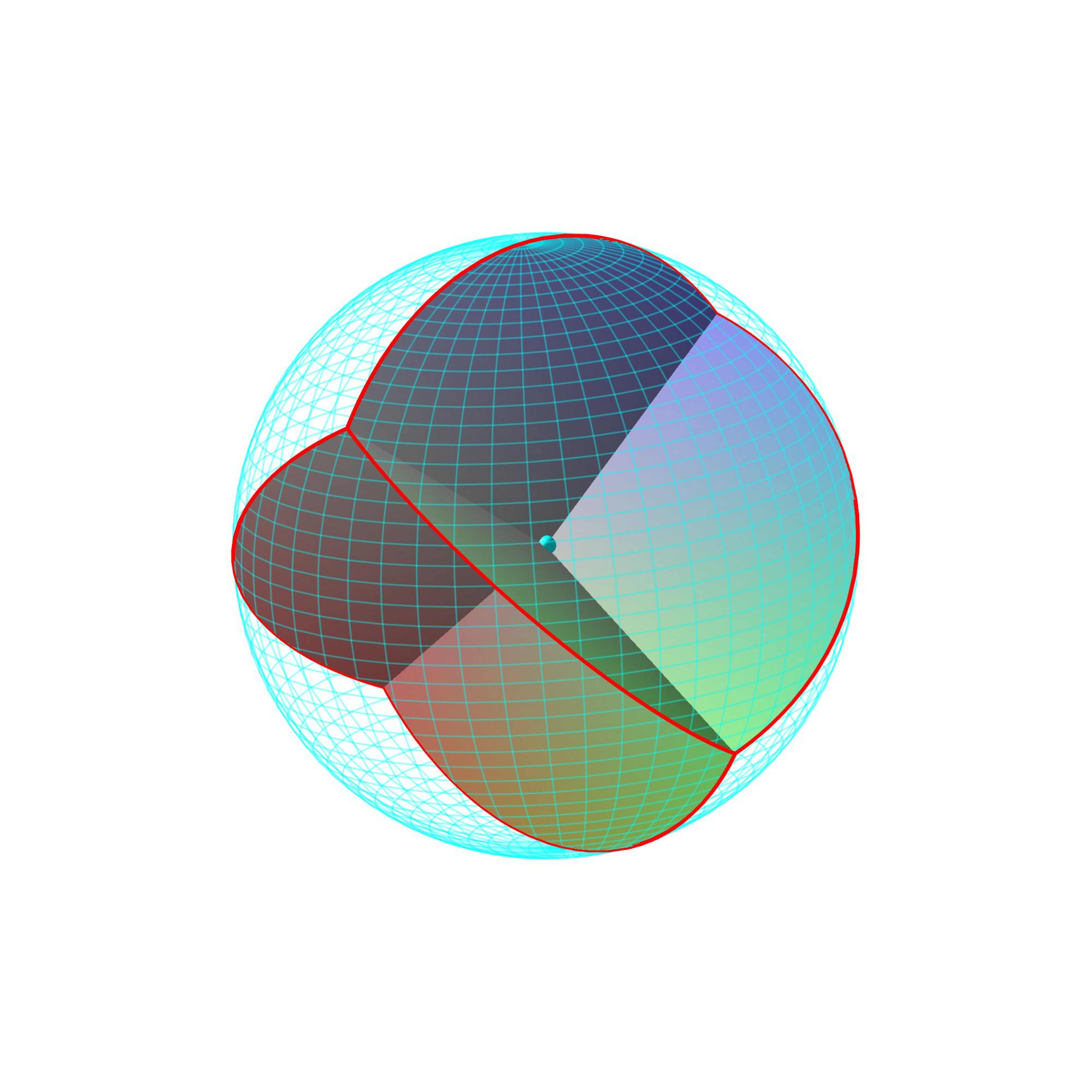} \qquad \qquad
\includegraphics[height=20mm]{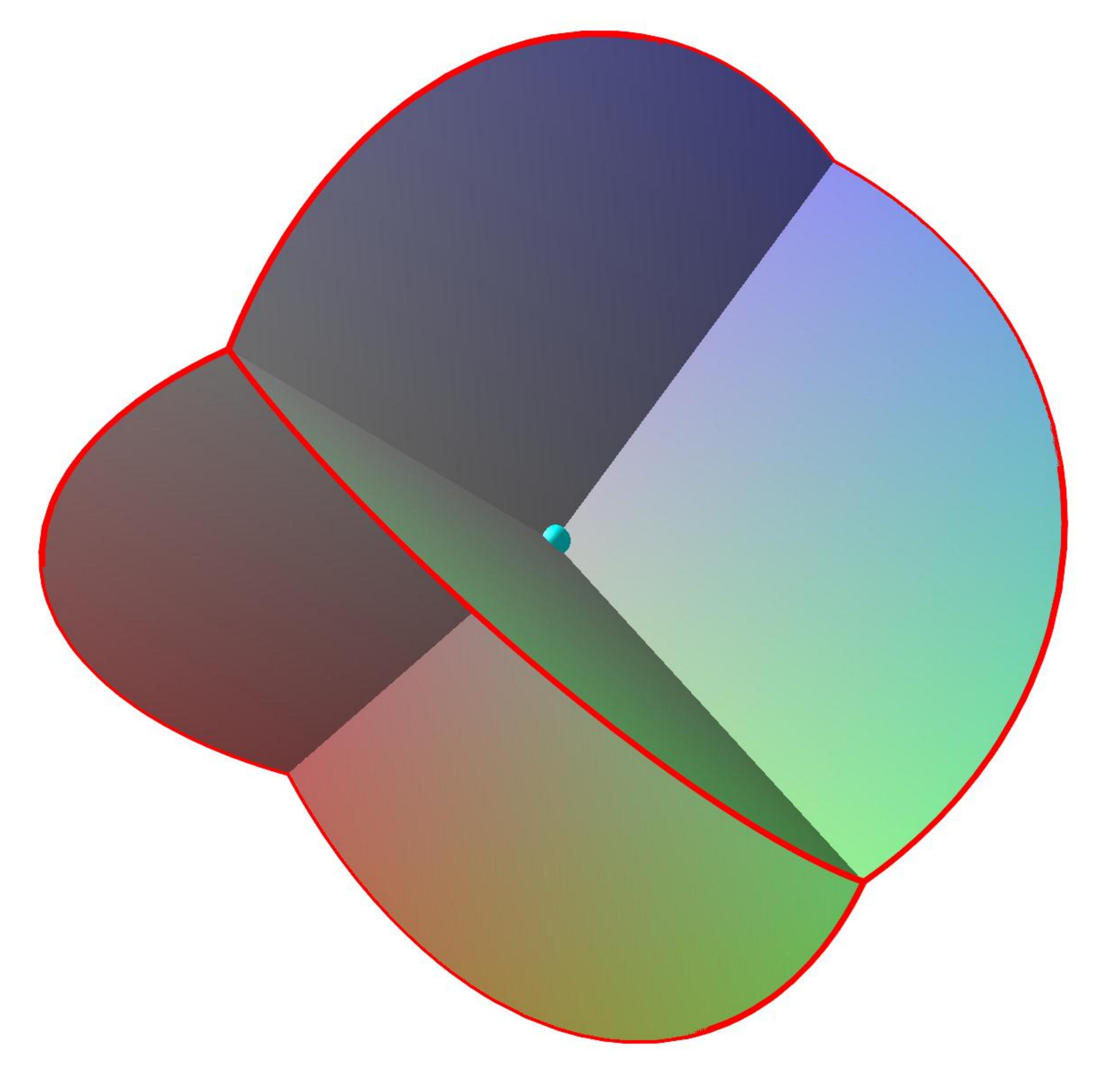}}
\begin{center}
\caption{\small{Cutting off spherical scoops -- extrinsic foam discs -- from  affine foams around an \emph{edge point} (top row) and a \emph{vertex point} (bottom row), respectively.}}\label{tetraball}
\end{center}
\end{figure}

We will assume throughout that the length of the mean curvature vector of the faces of the foam $F$ is bounded by a constant $h$ as follows:
\begin{equation} \label{eqMeanCond}
|H(x)| \leq h \quad \textrm{for all} \quad x \, \in F\quad .
\end{equation}


Our main objective is to determine a lower bound for the area of an extrinsic ball (a 'spherical scoop') of a foam centered at a given vertex point. An extrinsic disc of radius $R$ centered at a vertex point is the intersection of a ball of the Euclidean $3$-space of radius $R$ centered at the vertex point of the foam. In figure \ref{extball} we show a numerical simulation of an extrinsic disc of the Kelvin foam  obtained by using the Surface Evolver program.

\begin{figure}
\centerline{
\includegraphics[scale=0.27]{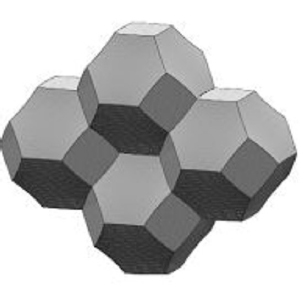} \qquad \includegraphics[scale=0.20]{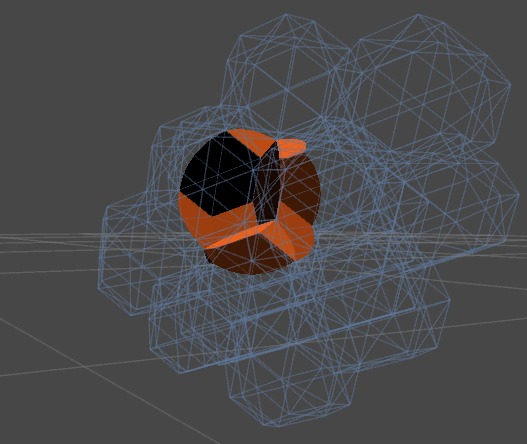}\qquad \includegraphics[scale=0.15]{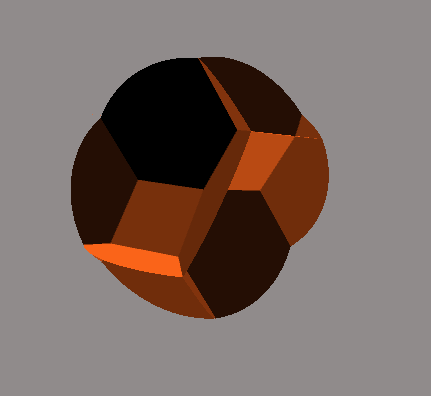}}
\begin{center}
\caption{\small{Piece of the Kelvin foam and an (enlarged) extrinsic disc of the Kelvin foam centered at a vertex point.}}\label{extball}
\end{center}
\end{figure}


\subsection{Main result. Area comparison} \label{secMain}

Let $F$ be a foam satisfying the mean curvature bound \eqref{eqMeanCond}. The main result of this paper states that for any point $o$ (which is not necessarily a vertex point) in $F$, and for every spherical scoop,  extrinsic foam disc,  $D_R(o)$ of radius $R$ centered at $o$, \emph{i.e.} the intersection $F \cap B_R(o)$, where $B_{R}(o)$ denotes the open Euclidean distance ball with radius $R$ and center $o$ in $\mathbb{R}^{3}$, a sharp lower bound for the area of the extrinsic disc can be obtained in terms of the radius, the upper bound for the norm of the mean curvature vector field and the type of the point $o$.

More precisely, we can state the following main theorem,

\begin{thmain} \label{thmainA}
Let $F$ be a foam properly immersed satisfying the mean curvature bound \eqref{eqMeanCond}. Then the area of the extrinsic foam disc $D_R(o)$ of radius $R$ centered at $o$ is bounded from below by
\begin{equation}\label{MainIneq}
\textrm{A}\left(D_R(o)\right)\geq \theta(o) \cdot e^{-2hR}\cdot  \pi R^2,
\end{equation}
where
\begin{equation}
\theta(o)=
\left\{
\begin{array}{lcl}
\theta_{v} = \frac{3}{\pi}\arccos(-1/3)&\textrm{if }&o\textrm{ is a vertex of } F\\ \\
\theta_{e} = \frac{3}{2}&\textrm{if }&o\textrm{ lies in an edge of } F\\ \\
\theta_{f} = 1&\textrm{if }&o\textrm{ lies in a face of } F \quad .
\end{array}
\right.
\end{equation}
Furthermore, equality in the  inequality (\ref{MainIneq}) is attained if and only if every face element of the extrinsic foam disc $D_R(o)$ is a piece of an affine  plane containing $o$.
\end{thmain}
\begin{remark}
This main theorem can be viewed as a concrete explication of
a general result of \cite{Allard}  with special focus on the particular structural data
that appear for Plateau foam varifolds in $\mathbb{R}^{3}$. In this paper we provide a proof using an adapted version of the divergence theorem for foams. Moreover, in Theorem \ref{teo5.2} below we extend the main theorem and obtain an even sharper global lower area bound for foams using a specific optimal measure on the vertex set of the respective foams, see definition \ref{def1}.
\end{remark}

\begin{simpexamples}
The simplest non-planar example of a foam consists of just one spherical bubble $F$  with $1$ face (of radius $\rho$), no edges and no vertices. See Figure \ref{figSimpExamp}. In this case, Theorem \ref{thmainA} says that for any point $o$ on the sphere, the area of the extrinsic $R$-disc centered at $o$ is:
\begin{equation} \label{eqSphere}
\area(F \cap B_{R}(o)) \geq \pi R^{2} \cdot e^{-2R/\rho} \quad,
\end{equation}
where we have used the optimal mean curvature bound $h = 1/\rho$ for $F$. The inequality \eqref{eqSphere} is easily seen to be true for all $R$ independent of the given $\rho$, since in this case we have:
\begin{equation}
\area(F \cap B_{R}(o)) = \left\{
  \begin{array}{ll}
    \pi\,R^{2} & \hbox{\textrm{for} $\quad R \leq 2\rho\,\,$;} \\ \\
    4\pi\,\rho^{2} & \hbox{\textrm{for} $\quad R \geq 2\rho \quad .$}
  \end{array}
\right.
\end{equation}

Another simple example $F$  with $1$ face, no edges and no vertices is the cylinder with radius $\rho$, which is also on display in Figure \ref{figSimpExamp}. The estimate of Theorem \ref{thmainA} is the same as before:
\begin{equation} \label{eqCyl}
\area(F \cap B_{R}(o)) \geq \pi R^{2} \cdot e^{-2R/\rho} \quad.
\end{equation}
Again this is easily verified; in particular for large $R \, >> \, \rho$ we have:
\begin{equation}
\area(F \cap B_{R}(o)) \, \dot{=} \, 2 \pi R \cdot \rho\, >> \,  \pi R^{2} e^{-2R/\rho} \quad .
\end{equation}

Finally, we should mention also the example supplied by the catenoid $F$. This is a minimal surface, so $h = 0$, and for any given point $o \in F$ and very large $R$ the intersection $F \cap B_{R}(o)$ is essentially two large discs of radius $R$ as indicated on the left in Figure \ref{figSimpExamp}. In consequence, the area estimate is thus:
\begin{equation}
\area(F \cap B_{R}(o)) \, \dot{=} \, 2 \pi R^{2} \, >  \,  \pi R^{2} \quad ,
\end{equation}
which again gives a rough verification of the theorem.
\end{simpexamples}

\begin{figure}
\centerline{
\includegraphics[width=23mm]{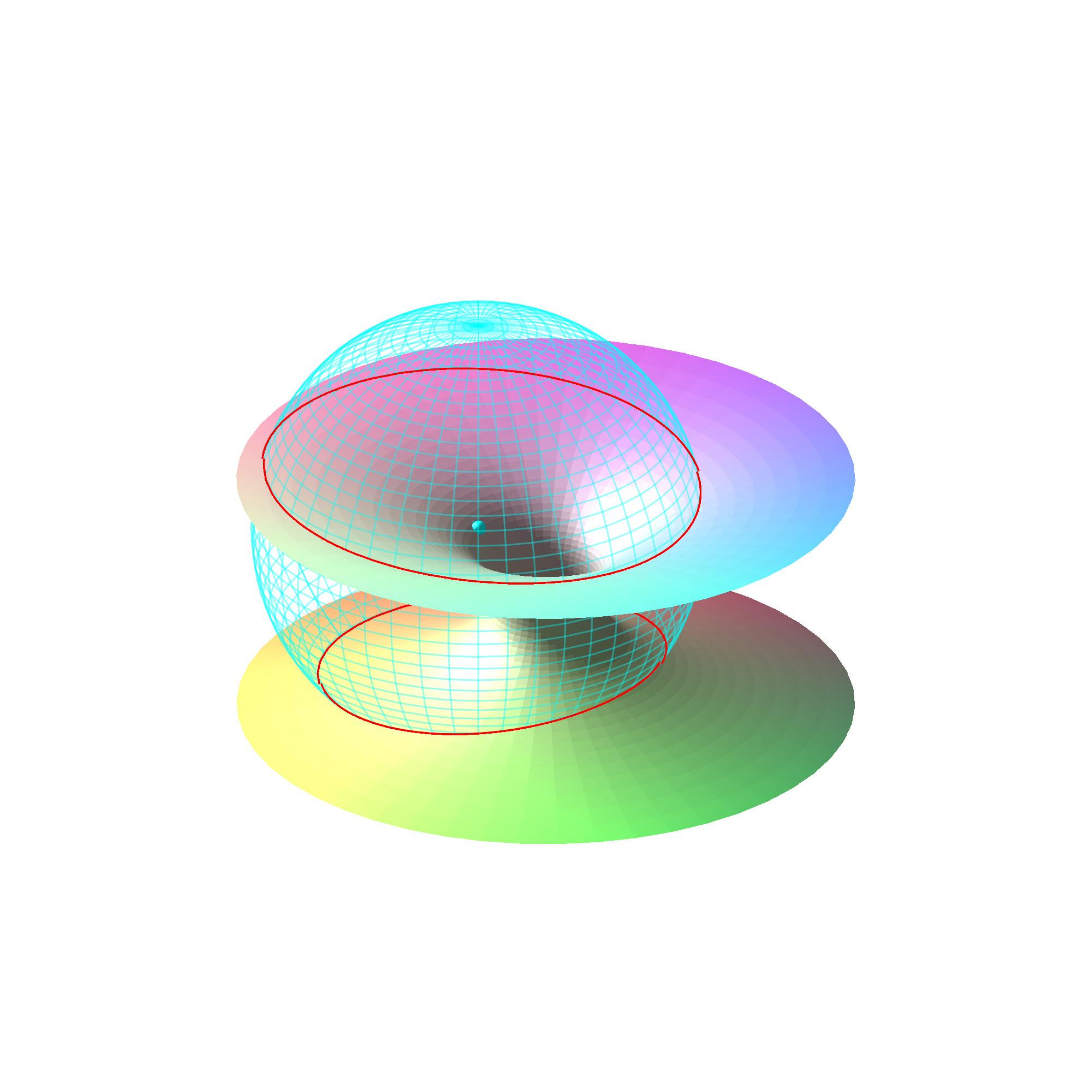} \includegraphics[width=23mm]{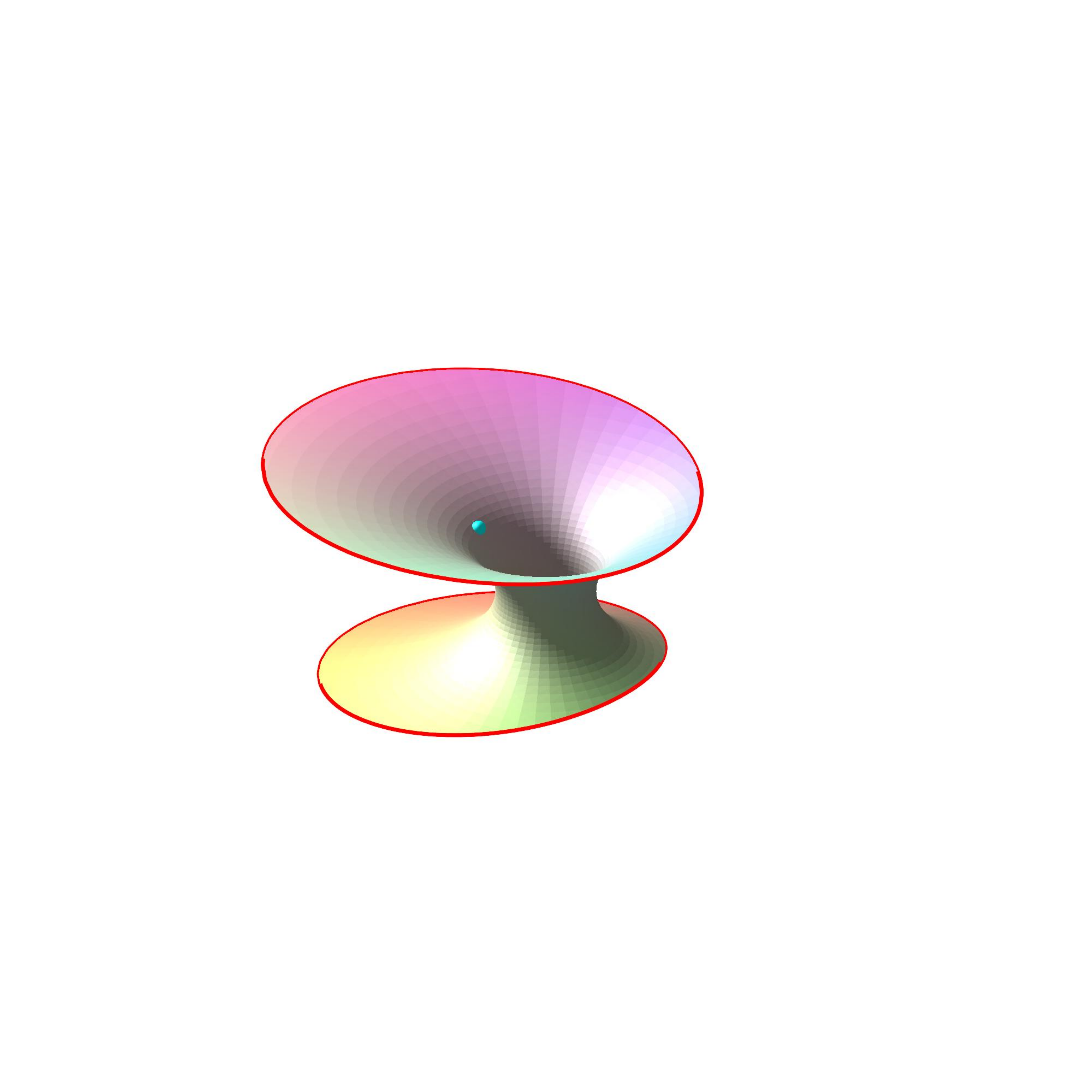}\,\,\,\includegraphics[width=23mm]{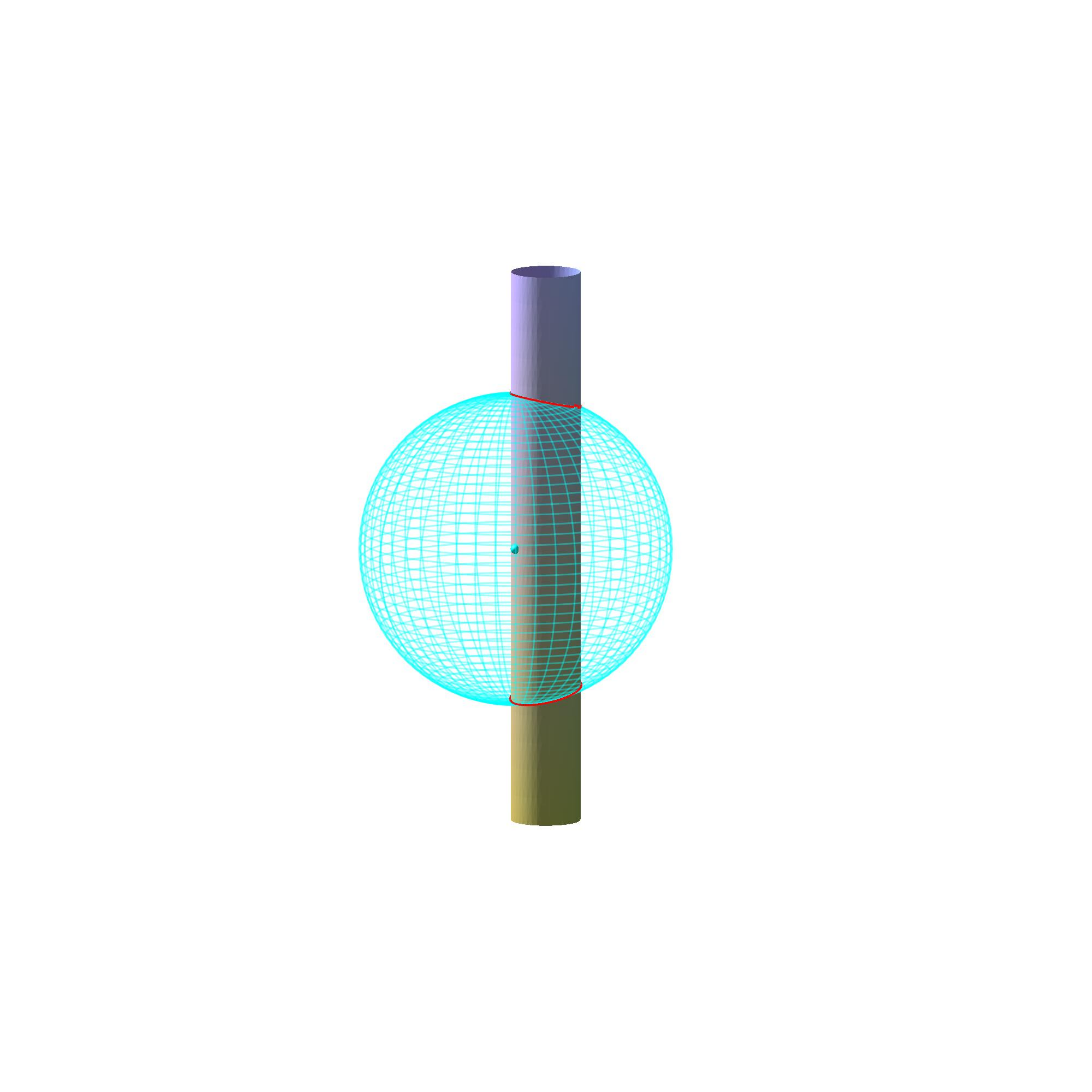}
\includegraphics[width=23mm]{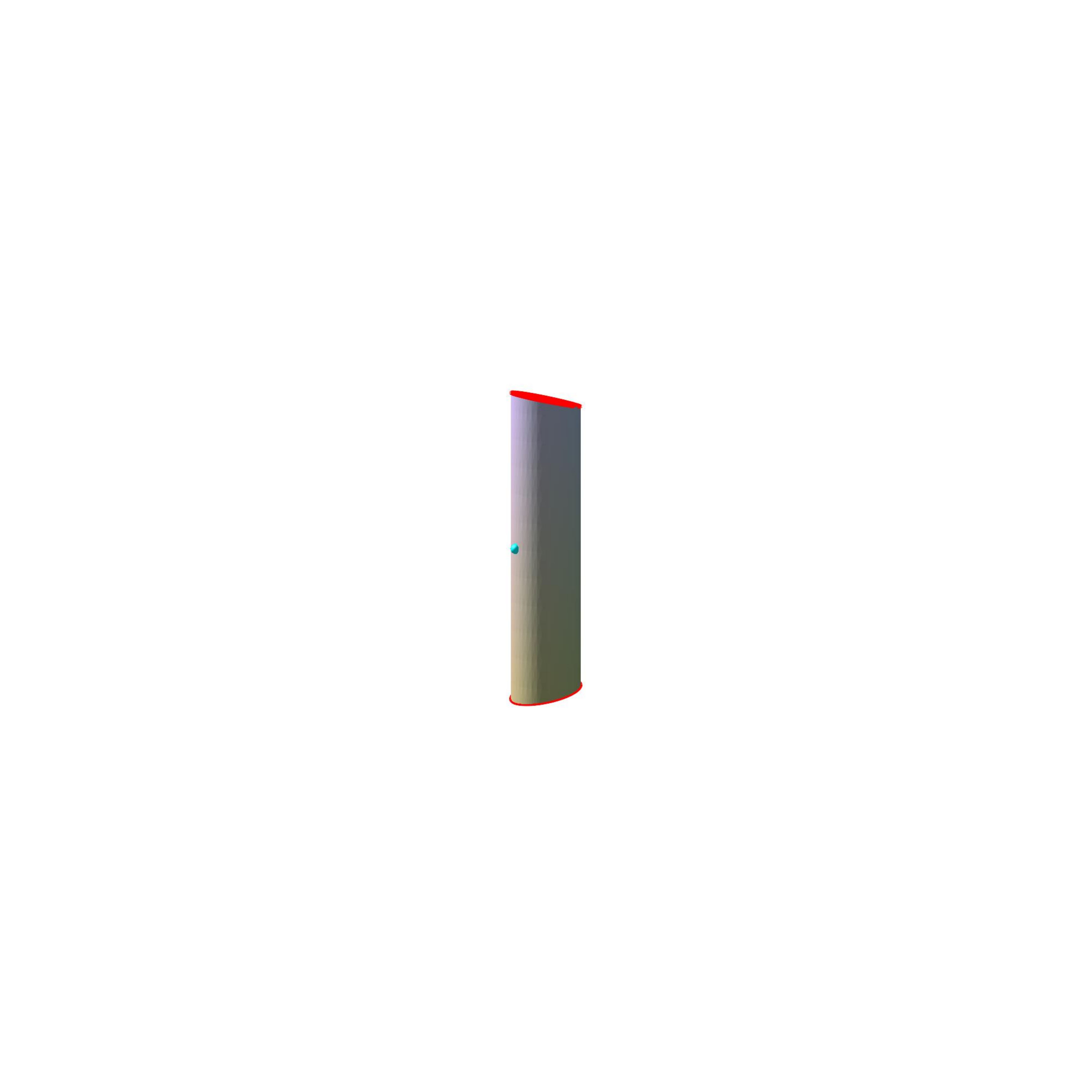}\,\,\,\includegraphics[width=23mm]{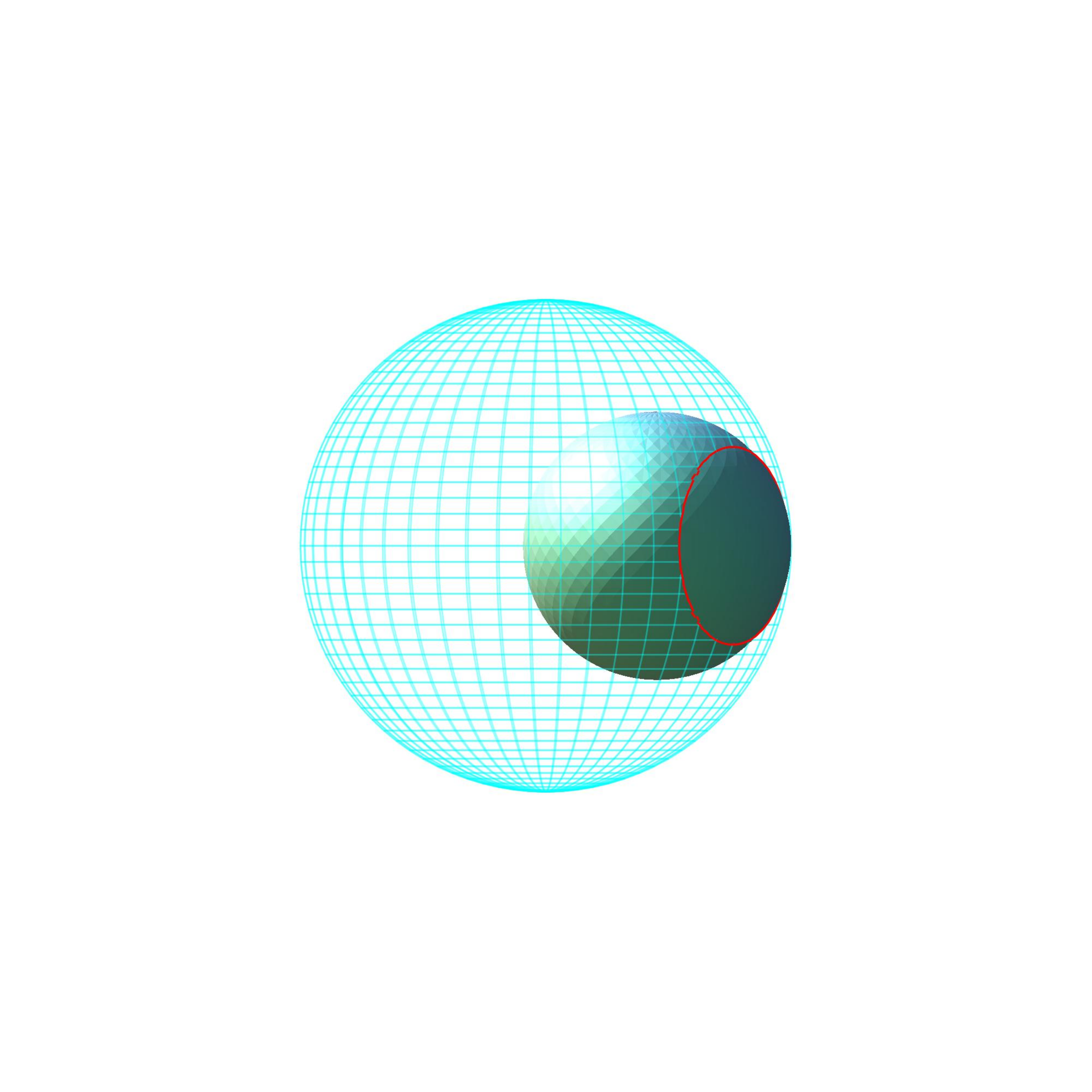}
\includegraphics[width=23mm]{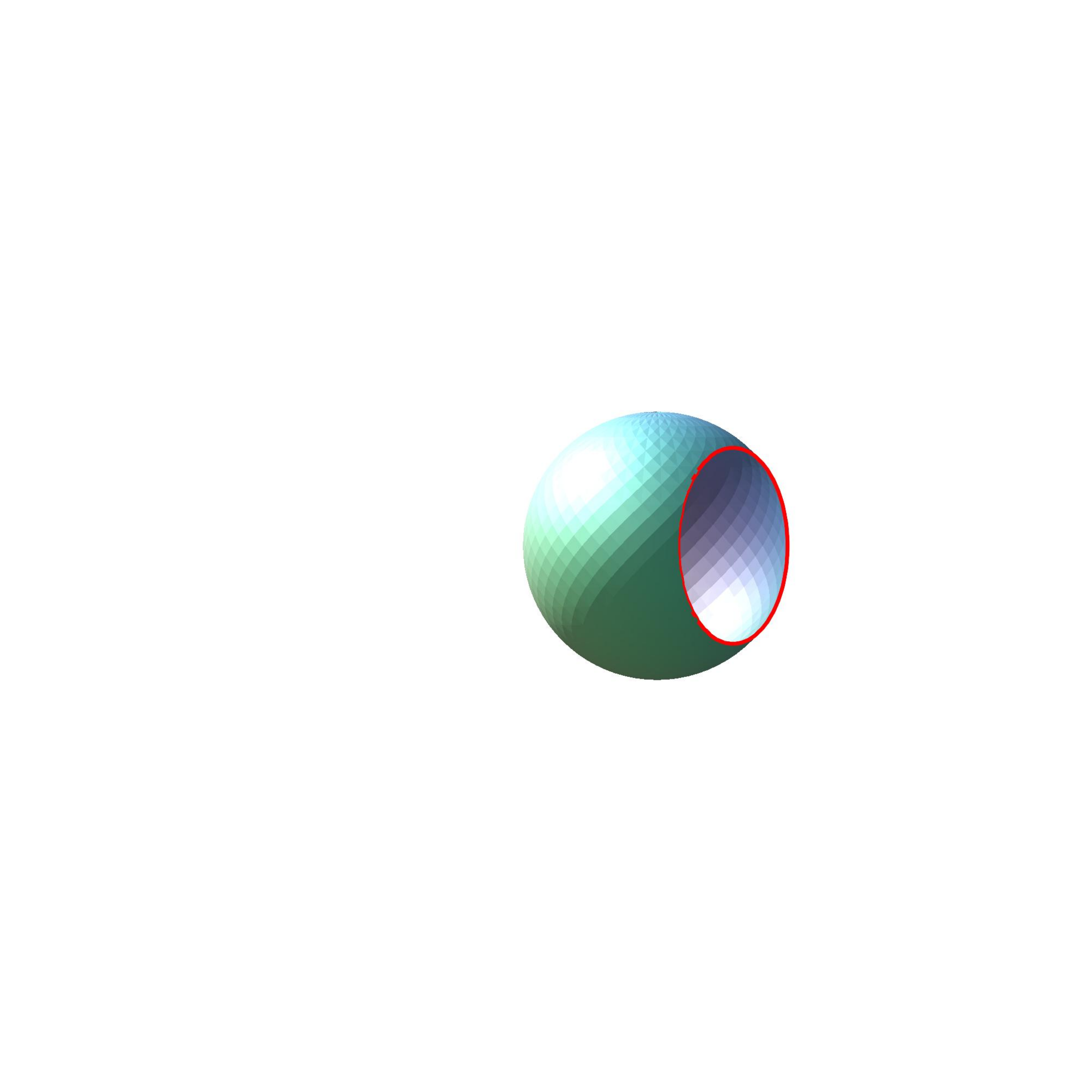}}
\caption{\small{From left to right: cutting off extrinsic foam discs from a catenoid, a cylinder and a sphere}}\label{figSimpExamp}
\end{figure}

As already alluded to, the main tool of the present paper is the statement and proof of an adapted version of the divergence theorem for foams,  theorem \ref{divtheofoams}.

\section{Outline of the Paper}
In section \ref{prelim} we shall introduce all required propositions and preliminaries, including the adapted version of the divergence theorem, needed to prove the main theorem of the paper  in section  \ref{proofMain}. The last part of the paper, section \ref{aplic}, is devoted to show some applications of the main theorem. These applications are:
\begin{enumerate}
\item  A lower bound for the area of a compact foam, \S\ref{aplic} \ref{secCompFoam}.
\item A recipe on how to use the main theorem to obtain a lower bound for the contribution of the area of a cell to the total area of a foam. Actually, in \S\ref{aplic} \ref{secKelvin}, we show how to obtain, via the main theorem, a lower bound for the area of a cell in the Kelvin foam.
\item A lower bound for the cost function in \S \ref{aplic}\ref{secCost} and a lower bound for the pressure in minimal foams in  \S \ref{aplic}\ref{secPreasure}.
\item Finally, in \S \ref{aplic}\ref{secEva}, we provide an algorithm to compute a lower bound for the area of a foam taking into account only the position of the vertices of the foam.
\end{enumerate}
\subsection*{Acknowledgment}We would like to thank Frank Morgan for reminding us about the paper \cite{Allard}, where such volume comparison formulas appear in the more general setting of varifolds with generalized mean curvatures.
\section{Preliminaries}\label{prelim}
By using an appropriate version of the divergence theorem on extrinsic discs of foams and the general {Co-area} formula given by geometric measure theory, we state the main comparison theorem for the area of an extrinsic foam discs.

In a  foam $F\subset \erre^3$ there are three different kinds of points: the points on the interior of the faces, the points on the interior of the edges, and the points on the vertices. The faces in a foam are smooth CMC surfaces (\emph{i.e.}, with constant norm of the mean curvature vector field). They are orientable and support individually a well defined normal vector field. We can consider that each face of the foam is embedded in $\erre^3$.

\subsection{Intrinsic and extrinsic distance function on a foam}

Given the inclusion map $i:F\to\erre^3$ for the foam to the Euclidean space $\erre^3$, we {will} say that a map $\gamma: I\to F$, from the interval $I\subset \erre$  is a \emph{piece-wise smooth curve segment of $F$} if the composed map with the inclusion $\widetilde \gamma:I\to\erre^3$, $\widetilde \gamma=i\circ \gamma$ is a piecewise smooth curve segment of $\erre^3$. Every piecewise smooth curve segment $\gamma$ has length ${\rm L}_F(\gamma)$ given by the length of the associated piecewise smooth curve segment $\widetilde\gamma$ in $\erre^3$, \emph{i.e.}, ${\rm L}_F(\gamma)={\rm L}(\widetilde\gamma)$.

Given two points, $p,q\in F$ we can now define the\emph{ intrinsic distance} from $p$ to $q$, denoted by ${\rm dist}_F(p,q)$, to be the infimum of ${\rm L}(\gamma)$ over all piece-wise smooth curve segments $\gamma$ from $p$ to $q$. With this definition, $(F,{\rm dist}_F)$ becomes a metric space.

If we choose a point $o\in\erre^3$, we can define the \emph{extrinsic distance function} $r_o:F\to\erre$, by
$$
r_o(x)={\rm dist}_{\erre^3}(o,x).
$$
Hence, for any two points $p,q\in F$,
$$
\vert r_o(p)-r_o(q)\vert\leq{\rm dist}_{\erre^3}(p,q)\leq{\rm}{\rm dist}_{F}(p,q).
$$
That means that the extrinsic distance function is a $1$-Lipschitz function on the metric space $(F,{\rm dist}_F)$. Observe that, since each face $\{F_i\}$ of $F$ is a smooth surface of $\erre^3$, then the extrinsic distance function is a $C^\infty$ function on $F_i\setminus \{o\}$. Moreover for any $v\in T_pF_i$, denoting $\rho_o(x)={\rm dist}_{\erre^3}(o,x)$,
\begin{equation}
dr_o(v_i)=\langle \nabla^Fr_o,v_i\rangle=d\rho_o(v_i)=\langle \nabla^{\erre^3} \rho_o,v_i\rangle\leq \Vert v_i\Vert,
\end{equation}
which implies
\begin{equation}
\Vert \nabla^{F_i}r_o\Vert\leq 1.
\end{equation}

By using  Sard's theorem, the set of critical values of $r_o:F_i\setminus\{o\}\to \erre_+$ has zero Lebesgue measure in $\erre^+$.  The set of sublevel set of the extrinsic distance is precisely the extrinsic disc, namely
\begin{equation}
D_R(o)=\{x\in F\,:\,r_o(x)<R\}=B_R(o)\cap F.
\end{equation}
The boundary of the extrinsic disc $D_R$ will be denoted by $\partial D_R$, \emph{i.e.},
\begin{equation}
\partial D_R(o)=\{x\in F\,:\,r_o(x)=R\}=S_R(o)\cap F.
\end{equation}
Similarly, for $\rho<R$ the extrinsic annulus is
\begin{equation}
A_{\rho,R}(o):=\{x\in F\,:\, \rho\leq r_o(x)\leq R\}
\end{equation}
Hence, by using  the Co-area formula (see \cite{Sakai} for instance), we can state
\begin{proposition}\label{coarea-apply}
Let $D_R(o)$ be an extrinsic disc of a foam $F$. Suppose that $R$ is a regular value of $r_o:F_i\setminus\{o\}\to\erre$ for every face $F_i$ of $F$, and suppose moreover the $\partial D_R$ meets transversally every edge of $F$. Then,
\begin{equation}
\frac{d}{dt}{\rm A}(D_t(o))\vert_{t=R}=\sum_{i}\int_{F_i\cap\partial D_R}\frac{dL_i}{\Vert \nabla^{F_i} r_o\Vert}\quad.
\end{equation}
\end{proposition}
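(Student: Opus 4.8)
The plan is to reduce the statement to the standard Co-area formula applied face-by-face, and then to differentiate the resulting integral representation by the fundamental theorem of calculus. Since the edges and vertices of $F$ have Hausdorff dimension at most one, they carry no $2$-dimensional measure, so I would first record that
$$
\area(D_t(o))=\sum_i\area\big(D_t(o)\cap F_i\big),
$$
the sum running over the smooth faces $F_i$. It therefore suffices to treat each face separately and to add the contributions at the end.

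On a fixed face $F_i$, the restriction $r_o\vert_{F_i}$ is $C^\infty$ on $F_i\setminus\{o\}$ and satisfies $\Vert\nabla^{F_i}r_o\Vert\leq 1$ (as established above), hence it is Lipschitz. I would then apply the Co-area formula of \cite{Sakai} on $F_i$ to the function $g=\frac{1}{\Vert\nabla^{F_i}r_o\Vert}\,\mathbf{1}_{\{r_o<t\}}$, which is well defined almost everywhere on each level set since, by Sard's theorem, the critical values of $r_o\vert_{F_i\setminus\{o\}}$ form a null set. The factor $\Vert\nabla^{F_i}r_o\Vert$ then cancels, yielding the integral representation
\begin{equation}
\area\big(D_t(o)\cap F_i\big)=\int_0^t\left(\int_{F_i\cap\partial D_s}\frac{dL_i}{\Vert\nabla^{F_i}r_o\Vert}\right)ds .
\end{equation}
Summing over $i$ gives $\area(D_t(o))=\int_0^t\psi(s)\,ds$, where I set $\psi(s)=\sum_i\int_{F_i\cap\partial D_s}\frac{dL_i}{\Vert\nabla^{F_i}r_o\Vert}$.

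The final step is to differentiate at $t=R$. By the fundamental theorem of calculus one has $\frac{d}{dt}\area(D_t(o))\vert_{t=R}=\psi(R)$ as soon as $\psi$ is continuous at $s=R$, and this is exactly where the two hypotheses enter. Since $R$ is a regular value of $r_o\vert_{F_i\setminus\{o\}}$, the level set $F_i\cap\partial D_R$ is a smooth embedded $1$-manifold along which $\Vert\nabla^{F_i}r_o\Vert$ is bounded away from zero, so the integrand is continuous there; and since $\partial D_R$ meets every edge transversally, the endpoints of these arcs lying on the edges depend continuously on $s$ for $s$ near $R$, so no arc of the level curve can appear, disappear, or change length discontinuously as $s\to R$.

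I expect the main obstacle to be precisely this continuity of $\psi$ at $R$: the Co-area formula and the vanishing of the $2$-dimensional measure on the $1$-skeleton are routine, whereas the delicate point is controlling the interaction of the level curves $F_i\cap\partial D_s$ with the Plateau edges. The transversality hypothesis is the condition that makes the family of boundary curves vary continuously — in particular in total weighted length — in a neighbourhood of $s=R$, and hence rules out the jumps that would otherwise break the differentiation step.
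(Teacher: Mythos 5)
Your proof is correct and takes essentially the same route as the paper, which offers no argument for this proposition beyond invoking the Co-area formula of \cite{Sakai}: you simply make explicit the face-by-face application of that formula, the negligibility of the $1$-skeleton, and the differentiation at $t=R$. The only detail worth pinning down further is the continuity of $\psi$ at $s=R$, for which one should note that the regular values of the proper restrictions $r_o\vert_{F_i\setminus\{o\}}$ (together with the transversality condition at the edges) form an open set, so the entire family of level curves for $s$ near $R$ is regular and varies smoothly, exactly as you indicate.
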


\subsection{Divergence theorem on foams} \label{secDivergence}
Let us now, without loss of generality, center a point $o\in F$ of the foam $F$ in the origin of $\erre^3$ and consider the extrinsic disc $D_R(0)=F\cap B_R(0)$ of radius $R$ and center $0\in \erre^3$ given by the intersection of the foam $F$ with the ball $B_R(0)$ in $\erre^3$ of radius $R$ and centered at $0\in \erre^3$. We impose moreover that $R$ is such that every edge meets $\partial D_R$ transversally. Such extrinsic disc $D_R(0)$ centered at  $0\in \erre^3$ is not a smooth surface but is composed by a finite number $f_R$ of faces (which are in fact individually smooth surfaces), a finite number of edges $e_R$ and a finite number of vertices $v_R$. Namely,
\begin{equation}
D_R(0)=\{F_i\}_{i=1}^{f_R}\cup\{E_i\}_{i=1}^{e_R}\cup\{V_i\}_{i=1}^{v_R}
\end{equation}
where $\{F_i\}_{i=1}^{f_R},\{E_i\}_{i=1}^{e_R},\{V_i\}_{i=1}^{v_R}$ are the sets of faces, edges and vertices respectively. Let us denote by
\begin{equation}
\begin{aligned}
\mathcal{F}_R:=&\{F_i\}_{i=1}^{f_R}\\
\mathcal{E}_R:=&\{E_i\}_{i=1}^{e_R}\\
\mathcal{V}_R:=&\{V_i\}_{i=1}^{v_R}
\end{aligned}
\end{equation}

Let us denote by $n_i$ a unit normal vector field to the face $F_i$.
Hence, given a smooth vector field $X:B_R(0)\to T\erre^3$ in $B_R(0)\subset \erre^3$ we can obtain a vector field $X_{F_i}$ tangent and smooth on each face $F_i$ by using the normal vector field $n_i$, in such a way that

\begin{equation}
X_{F_i}:=X-\langle X,n_i\rangle n_i.
\end{equation}
Denote by $X_F$ the application $X_F: F\setminus \mathcal{E}_R\to T\erre^3$ given by
\begin{equation}
X_F(x)=X_{F_i}(x)
\end{equation}
where $F_i$ is the face such that $x \in F_i$. With these definitions we can state the following divergence theorem on foams

\begin{theorem}[Divergence theorem on foams]\label{divtheofoams}
Let $F\subset \erre^3$ be a foam in $\erre^3$. Let $D_R(0)$ be the extrinsic disc given by $D_R(0)=F\cap B_R(0)$, then for any vector field $X: B_R\to T\erre^3$

\begin{equation}
\int_{D_R(0)}\textrm{div} X_F dA=\int_{\partial D_R(0)}\langle X,\nu_F\rangle dL
\end{equation}
where
\begin{equation}
\int_{\partial D_R(0)}\langle X,\nu_F\rangle dL:=\sum_{i=1}^{f_R}\int_{F_i\cap \partial B_R(0)}\langle X,\nu_i^R\rangle dL_i.
\end{equation}
\end{theorem}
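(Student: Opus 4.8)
The plan is to reduce the foam divergence theorem to the classical divergence theorem applied separately on each smooth face $F_i$, and then to show that all the contributions coming from the interior edges cancel in threes as a direct consequence of Plateau's $2\pi/3$ rule. The vertex set can be dismissed as negligible, since it is finite and the relevant boundary integrals are one-dimensional.

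First I would apply the classical divergence theorem on a compact surface with boundary to each face $F_i$, using the tangential field $X_{F_i} = X - \langle X, n_i\rangle n_i$:
\begin{equation}
\int_{F_i} \operatorname{div} X_{F_i}\, dA = \int_{\partial F_i} \langle X_{F_i}, \nu_i\rangle\, dL,
\end{equation}
where $\nu_i$ is the outward unit conormal of $F_i$ along $\partial F_i$ (tangent to $F_i$, orthogonal to $\partial F_i$, and pointing out of the face). The boundary of each face inside $D_R(0)$ splits into two kinds of arcs: the spherical arcs $F_i \cap \partial B_R(0)$ and the interior-edge arcs lying on the Plateau borders $E_k \in \mathcal{E}_R$. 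Since $\nu_i$ is tangent to $F_i$ and hence orthogonal to $n_i$, the tangential projection is invisible to the conormal pairing, $\langle X_{F_i}, \nu_i\rangle = \langle X, \nu_i\rangle$; in particular the spherical arcs assemble, after summation over $i$, into exactly the right-hand side $\sum_i \int_{F_i\cap\partial B_R(0)}\langle X, \nu_i^R\rangle\, dL_i$ of the theorem. It therefore remains only to prove that the edge arcs contribute nothing.

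To handle the edge contributions I would fix an interior edge $E_k$ and examine the three faces $F_1, F_2, F_3$ meeting along it. Using the same orthogonality observation as above, each face contributes $\langle X, \nu_i\rangle$ along $E_k$. The geometrically essential point is then the conormal balance $\nu_1 + \nu_2 + \nu_3 = 0$ along $E_k$. I would establish this by choosing the unit tangent $T$ to $E_k$ and noting that each $\nu_i$ lies in the plane $T^{\perp}$; Plateau's rule that the three faces meet at equal $2\pi/3$ dihedral angles places the three unit conormals symmetrically at mutual angles $2\pi/3$ in this plane, and three coplanar unit vectors at $120^{\circ}$ sum to zero. Since all three face integrals run over the same arc $E_k$ with the same line element $dL$, the pointwise total is $\langle X, \nu_1 + \nu_2 + \nu_3\rangle = 0$, so the three arc integrals along $E_k$ cancel exactly.

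The main obstacle I anticipate is the careful bookkeeping at the edges and vertices rather than any deep analytic difficulty. On the geometric side one must verify that the outward conormals of the three faces really do point symmetrically into the common edge, fixing signs and orientations consistently, so that the $120^{\circ}$ configuration, and hence the identity $\nu_1+\nu_2+\nu_3 = 0$, holds pointwise along every edge. On the regularity side one must ensure that the classical divergence theorem is legitimately applicable to each $F_i$: the transversality of $\partial B_R(0)$ to the edges, already imposed on $R$, guarantees a piecewise-smooth Lipschitz face boundary, while the vertices $\mathcal{V}_R$, being a finite set, form a null set for the one-dimensional edge integrals and are simply discarded. Once these points are settled, the summation over faces collapses to the spherical boundary integral, which is precisely the assertion of the theorem.
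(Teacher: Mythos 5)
Your proposal is correct and follows essentially the same route as the paper: decompose the integral over the faces, apply the classical divergence theorem on each $F_i$, use that $\langle X_{F_i},\nu_i\rangle=\langle X,\nu_i\rangle$ since the conormal is tangent to the face, and cancel the edge contributions via the Plateau $2\pi/3$ rule, which forces the three unit conormals along each edge to sum to zero. The paper records this cancellation as the identity $\sum_{i}\nu_{i,j}=0$ for each edge $e_j$, which is exactly your conormal balance $\nu_1+\nu_2+\nu_3=0$.
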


\begin{proof}
We can refer the integral of the divergence of the tangential component $X_F$ of vector field $X \in \mathfrak{X}(\erre^3)$  to the sum of the divergence in each face. Namely,
\begin{equation}
\int_{D_R(0)}\textrm{div} X_F dA:=\sum_{i=1}^{f_R}\int_{F_i}\textrm{div}X_{F_i}dA_i
\end{equation}
Observe that on each face $F_i$ we can have two kinds of boundary components of $\partial F_i$,
\begin{equation}
\partial F_i=\left(F_i\cap \partial B_R\left(0\right)\right) \cup \left(F_i\cap \mathcal{E}_R\right)
\end{equation}
Accordingly, let us denote by $\nu_i^R$ the outward unit normal vector field on $F_i\cap \partial B_R(0)$ with $\nu_i^R=0$ if $F_i\cap \partial B_R(0)=\emptyset$ and by $\nu_{i,j}$ the outward unit normal vector field on $F\cap e_j$ with $\nu_{i,j}=0$ if $F\cap e_j=\emptyset$.
Observe that by the structure of the foam and the three-faces-one-edge with equal angles we have
\begin{equation}
\sum_{i=1}^{f_R}\nu_{i,j}=0,
\end{equation}
for any $j\in \{1,\cdots,e_R\}$.
Hence, by using the divergence theorem on each face (see theorem 14.34 of \cite{LeeBook} for instance) we get:

\begin{eqnarray}
\int_{D_R(0)}\textrm{div} X_F dA & = &\sum_{i=1}^{f_R}\int_{F_i}\textrm{div}X_{F_i}dA_i\nonumber\\
&=&\sum_{i=1}^{f_R}\left(\int_{F_i\cap \partial B_R(0)}\langle X_{F_i},\nu_i^R\rangle dL_i +\sum_{j=1}^{e_R}\int_{F_i\cap e_j}\langle X_{F_i},\nu_{i,j}\rangle dL_{i,j}\right)\nonumber\\
&=&\sum_{i=1}^{f_R}\left(\int_{F_i\cap \partial B_R(0)}\langle X,\nu_i^R\rangle dL_i +\sum_{j=1}^{e_R}\int_{e_j}\langle X,\nu_{i,j}\rangle dL_{j}\right)\\
&=&\sum_{i=1}^{f_R}\int_{F_i\cap \partial B_R(0)}\langle X,\nu_i^R\rangle dL_i +\sum_{j=1}^{e_R}\int_{e_j}\langle X,\sum_{i=1}^{f_R}\nu_{i,j}\rangle dL_{j}\nonumber\\
&=&\sum_{i=1}^{f_R}\int_{F_i\cap \partial B_R(0)}\langle X,\nu_i^R\rangle dL_i.\nonumber
\end{eqnarray}\end{proof}

\subsection{Laplacian of the coordinate functions of $\erre^3$ on a CMC surface}
Recall that given a smooth surface $S\subset \erre^3$ in the Euclidean space $\erre^3$, the covariant derivatives on $S$ and $\erre^3$ are related by the Gauss formula by
\begin{equation}
\nabla_{\overline X}^{\erre^3}\overline{Y}=\nabla_X^{S}Y+\alpha(X,Y)
\end{equation}
for any two vector fields $X,Y\in\mathfrak{X}(S)$ and any two extensions $\overline X,\overline Y\in\mathfrak{X}(\erre^3)$. the term $\alpha$ is the second fundamental form of $S$. Similarly, the Hessian operators $\hess^S$ and $\hess^{\erre^3}$ are related by
\begin{proposition}\label{hess-comp}Let $f:\erre^3\to\erre$ be a smooth function and let us denote also by $f:S\to\erre$ the restriction of the function to the smooth surface $S\subset \erre^3$, then
\begin{equation}
\hess^Sf(X,Y)=\hess^{\erre^3}f(\overline X,\overline Y)+\langle \alpha(X,Y),\nabla^{\erre^3}f\rangle
\end{equation}
for any two vector fields $X,Y\in\mathfrak{X}(S)$ and any two extensions $\overline X,\overline Y\in\mathfrak{X}(\erre^3)$.
\end{proposition}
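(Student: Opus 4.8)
The plan is to expand both sides of the claimed identity using the standard intrinsic definition of the Hessian as a second covariant derivative, and then to let the Gauss formula account for the discrepancy. Recall that on any Riemannian manifold $M$ the Hessian of a smooth function $f$ acts on vector fields $X,Y$ by
\begin{equation}
\hess^M f(X,Y) = X(Yf) - \left(\nabla_X^M Y\right)f \quad .
\end{equation}
So I would write this formula once for $M = S$ with the tangent fields $X,Y \in \mathfrak{X}(S)$, and once for $M = \erre^3$ with the chosen extensions $\overline X, \overline Y \in \mathfrak{X}(\erre^3)$, and then subtract.

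The first key observation is that, at points of $S$, the zeroth- and first-order terms coincide: since $X$ is tangent to $S$ and agrees with $\overline X$ there, the directional derivative $Xf$ of the restricted function equals $\overline X f$ of the ambient function, and likewise $X(Yf) = \overline X(\overline Y f)$ along $S$. Consequently the entire difference between the two Hessians collapses onto the difference of the connection terms:
\begin{equation}
\hess^S f(X,Y) - \hess^{\erre^3} f(\overline X, \overline Y) = \left(\nabla_{\overline X}^{\erre^3}\overline Y - \nabla_X^S Y\right)f \quad .
\end{equation}

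The second key step is to recognize the parenthetical expression via the Gauss formula $\nabla_{\overline X}^{\erre^3}\overline Y = \nabla_X^S Y + \alpha(X,Y)$ stated just above, which identifies it precisely with the normal second fundamental form term $\alpha(X,Y)$. Finally, for any vector $v \in \erre^3$ one has $vf = \langle \nabla^{\erre^3} f, v\rangle$, so applying this with $v = \alpha(X,Y)$ converts the directional derivative into the desired inner product and yields
\begin{equation}
\hess^S f(X,Y) = \hess^{\erre^3} f(\overline X, \overline Y) + \langle \alpha(X,Y), \nabla^{\erre^3} f\rangle \quad .
\end{equation}

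I do not anticipate a genuine obstacle here, as the result is a routine consequence of the Gauss formula; the only point requiring a little care is the justification that the first-order data agree along $S$, i.e.\ that $X(Yf)$ computed intrinsically matches $\overline X(\overline Y f)$ at points of $S$ when $X,Y$ are tangent. This is where tangency of the fields is used essentially, and it is what guarantees that the purely second-order normal correction $\alpha(X,Y)$ is the only surviving term. The symmetry of the resulting bilinear form in $X$ and $Y$ follows automatically from the symmetry of $\alpha$ and of the ambient Hessian, providing a consistency check on the computation.
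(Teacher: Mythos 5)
Your proof is correct and follows essentially the same route as the paper: both arguments reduce the difference of the two Hessians to the difference of the two connections and then invoke the Gauss formula to identify that difference with $\alpha(X,Y)$ acting on $f$. The only cosmetic distinction is that the paper computes with $\hess^S f(X,Y)=\langle\nabla^S_X\nabla^S f,Y\rangle$ and metric compatibility, while you use the equivalent formula $\hess^M f(X,Y)=X(Yf)-(\nabla^M_X Y)f$; the substance, including the key observation that the first-order data agree along $S$ for tangent fields, is identical.
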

\begin{proof}
\begin{equation}
\begin{aligned}
\hess^Sf(X,Y)=&\langle\nabla^S_X\nabla^Sf,Y\rangle=X(\langle\nabla^Sf,Y\rangle)-\langle\nabla^Sf,\nabla_XY\rangle\\
=& X(\langle\nabla^{\erre^3}f,Y\rangle)-\langle\nabla^{\erre^3}f,\nabla^S_XY\rangle\\
=& X(\langle\nabla^{\erre^3}f,Y\rangle)-\langle\nabla^{\erre^3}f,\nabla^{\erre^3}_XY\rangle+\langle\nabla^{\erre^3}f,\alpha(X,Y)\rangle\\
=&\hess^{\erre^3}f(\overline X,\overline Y)+\langle \alpha(X,Y),\nabla^{\erre^3}f\rangle.
\end{aligned}
\end{equation}\end{proof}
\noindent The above proposition leads us to
\begin{proposition}\label{laplacian-coordinates}Let $\varphi:S\to\erre^3$ be an immersed surface in $\erre^3$.
Let us denote by $\{x,y,z\}$ the coordinate functions in $\erre^3$ and their restrictions to $S$.  Let
\begin{equation}
H=\frac{1}{2}{\rm trace}_g(\alpha)=(H_1,H_2,H_3)
\end{equation}
be the mean curvature vector field of $S$. Then
\begin{equation}
\begin{aligned}
\triangle^Sx&=2H_1,\\
\triangle^Sy&=2H_2,\\
\triangle^Sz&=2H_3.
\end{aligned}
\end{equation}
\end{proposition}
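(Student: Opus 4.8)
The plan is to combine Proposition \ref{hess-comp} with the standard identification of the Laplacian as the metric trace of the Hessian, namely $\triangle^S f = \operatorname{trace}_g \hess^S f$. I would carry out the computation for the coordinate function $x$, the cases of $y$ and $z$ being identical by symmetry.

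First I would observe that the Euclidean gradient of the coordinate function $x:\erre^3\to\erre$ is the constant first coordinate vector field $\nabla^{\erre^3}x=e_1=(1,0,0)$. Since $e_1$ is parallel with respect to the flat connection of $\erre^3$, its covariant derivative vanishes, and hence the Euclidean Hessian $\hess^{\erre^3}x(\overline X,\overline Y)=\langle \nabla^{\erre^3}_{\overline X}\nabla^{\erre^3}x,\overline Y\rangle=0$ for all $\overline X,\overline Y\in\mathfrak{X}(\erre^3)$. Substituting this into Proposition \ref{hess-comp} collapses the formula to
\begin{equation}
\hess^Sx(X,Y)=\langle \alpha(X,Y),\nabla^{\erre^3}x\rangle=\langle \alpha(X,Y),e_1\rangle
\end{equation}
for all $X,Y\in\mathfrak{X}(S)$.

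Finally I would take the trace of both sides with respect to the induced metric $g$. Since the inner product with the fixed vector $e_1$ is linear and does not depend on the tangent directions being traced, the trace passes inside the bracket, giving
\begin{equation}
\triangle^Sx=\operatorname{trace}_g\hess^Sx=\langle \operatorname{trace}_g\alpha,e_1\rangle=\langle 2H,e_1\rangle=2H_1,
\end{equation}
where the last two equalities use the definition $H=\tfrac12\operatorname{trace}_g(\alpha)$ and the component expression $H=(H_1,H_2,H_3)$. Repeating the argument with $e_2$ and $e_3$ yields $\triangle^Sy=2H_2$ and $\triangle^Sz=2H_3$. There is essentially no serious obstacle in this proof; the only point requiring a little care is the vanishing of the Euclidean Hessian of a coordinate function, which rests on the constancy of its gradient, and the legitimacy of commuting the metric trace with the inner product against the constant vector $e_i$.
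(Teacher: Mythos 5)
Your proof is correct and follows essentially the same route as the paper: both arguments rest on the vanishing of the Euclidean Hessian of a linear function, the comparison formula of Proposition \ref{hess-comp}, and taking the metric trace to produce $2\langle a,H\rangle$. The only cosmetic difference is that the paper treats a general linear function $f_a$ and then specializes $a$ to the standard basis vectors, whereas you handle each coordinate function directly.
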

\begin{proof}
Let us consider the following vector $a=(a_1,a_2,a_3)$ and the following function
\begin{equation}
f_a:\erre^3\to\erre,\quad (x,y,z)\to f_a(x,y,z)=a_1x+a_2y+a_3z.
\end{equation}
One can easily check that
\begin{equation}
\nabla^{\erre^3}f_a=a,
\end{equation}
and hence
\begin{equation}
\hess^{\erre^3}f_a(\overline X,\overline Y)=0.\quad \forall \overline X,\overline Y\in\mathfrak{X}(\erre^3).
\end{equation}
Using therefore proposition \ref{hess-comp},
\begin{equation}
\hess^{S}f_a(X,Y)=\langle \nabla^{\erre^3}f_a,\alpha(X,Y)\rangle=\langle a, \alpha(X,Y)\rangle
\end{equation}
thus, for any orthonormal basis $\{e_1,e_2\}$ of $T_pS$ at $p\in S$,
\begin{equation}
\triangle^Sf_a=\sum_{i=1}^2\hess^{S}f_a(e_i,e_i)=\sum_{i=1}^2\langle a, \alpha(e_i,e_i)\rangle=\langle a, \sum_{i=1}^2\alpha(e_i,e_i)\rangle=2\langle a,H\rangle.
\end{equation}
Finally, the proposition follows for the particular cases $a=(1,0,0)$ or $a=(0,1,0)$ or $a=(0,0,1)$.
\end{proof}

\subsection{Density of a foam}
Denoting by $\theta(o)$ the density of the point $o$, \emph{i.e.} (see also definition 2.5 of \cite{M}),
$$\theta(o)=\lim_{s\to 0^+}\frac{\textrm{A}(D_s(o))}{\pi s^2}$$ we can state
\begin{proposition}\label{density}Let $F\subset \erre^3$ be a foam. Then for any $o\in F$
\begin{equation}
\theta(o)=
\left\{
\begin{array}{lcl}
\frac{3}{\pi}\arccos(-1/3)&\textrm{if }&o\textrm{ is a vertex of } F\\
\frac{3}{2}&\textrm{if }&o\textrm{ lies in the interior of an edge of } F\\
1&\textrm{if }&o\textrm{ lies in the interior of a face of } F\\
\end{array}
\right.
\end{equation}
\end{proposition}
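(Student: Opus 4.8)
The plan is to reduce the computation of $\theta(o)$ to a computation on the \emph{tangent cone} of $F$ at $o$, obtained by blowing up. Writing $F_\lambda := \lambda^{-1}(F-o)$ for the foam rescaled by $1/\lambda$ about $o$, the scaling of two--dimensional area gives $\area(F_s\cap B_1(0)) = s^{-2}\area(D_s(o))$, so that $\theta(o) = \lim_{s\to 0^+}\area(F_s\cap B_1(0))/\pi$. First I would show that this limit is governed entirely by the tangent planes of the faces through $o$: since each face $F_i$ is a smooth surface with a well--defined tangent plane $T_oF_i$, representing $F_i$ near $o$ as a graph over $T_oF_i$ shows that the rescaled faces converge to flat sectors and that $\area(F_s\cap B_1(0))$ converges to the area of the corresponding flat cone intersected with $B_1(0)$. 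Equivalently, this is the standard fact that a surface of bounded mean curvature has $2$--density equal to the density of its tangent cone, and it may also be read off directly from the monotonicity formula of \cite{Allard}.

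Next I would identify the tangent cone in each of the three cases directly from Plateau's rules. If $o$ lies in the interior of a face, the tangent cone is a single plane through $0$. If $o$ lies in the interior of an edge, three faces meet there and their tangent planes are three half--planes bounded by the common tangent line $\ell$ of the edge; by the second Plateau rule these half--planes meet along $\ell$ at mutual angles $2\pi/3$. If $o$ is a vertex, four smooth edges emanate from $o$ with tangent rays $v_1,\dots,v_4$, and by the third Plateau rule $\langle v_i,v_j\rangle = -1/3$ for $i\neq j$, so the rays point to the vertices of a regular tetrahedron (the $T$--type singularity, see \cite{Taylor}). The six faces meeting at $o$ correspond to the $\binom{4}{2}=6$ pairs $\{v_i,v_j\}$, each face $F_{ij}$ containing the two edges with tangents $v_i,v_j$; hence $T_oF_{ij}=\mathrm{span}(v_i,v_j)$ and the tangent cone of $F_{ij}$ is the flat circular sector of opening angle $\arccos(-1/3)$ swept from $v_i$ to $v_j$.

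Finally I would compute the area of each tangent cone inside $B_1(0)$, using that a flat sector of opening angle $\phi$ meets $B_1(0)$ in a region of area $\tfrac12\phi$ (so a full plane, $\phi=2\pi$, gives $\pi$). In the face case the area is $\pi$ and $\theta_f = 1$. In the edge case three half--planes give $3\cdot\tfrac12\pi = \tfrac32\pi$, whence $\theta_e = \tfrac32$. In the vertex case the six sectors of angle $\arccos(-1/3)$ give $6\cdot\tfrac12\arccos(-1/3) = 3\arccos(-1/3)$, whence $\theta_v = \tfrac{3}{\pi}\arccos(-1/3)$. These are exactly the three asserted values.

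The main obstacle is the justification in the first step that the curved foam has the same density as its flat tangent cone. The delicate points are that the faces are only asymptotically flat and, in the edge and vertex cases, that the bounding edges are curved rather than straight, so one must verify that these deviations affect $\area(F_s\cap B_1(0))$ only at order $O(s)$ and vanish in the limit. Representing each face as a graph over its tangent plane, the area element differs from that of the flat sector by $O(s^2)$ and the bounding edges deviate from their tangent rays by $O(s^2)$; both corrections are negligible to leading order, which reduces each contribution to that of the corresponding flat sector and completes the argument.
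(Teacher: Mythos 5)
Your argument is correct, but note that the paper itself offers no proof of Proposition \ref{density}: it is stated as a known fact, with only a pointer to the definition of density in \cite{M}, so there is nothing in the text to compare your route against. Your blow-up strategy is the natural one and all three computations check out: a plane through $o$ meets $B_1(0)$ in area $\pi$; three half-planes give $\tfrac32\pi$; and the tetrahedral cone, being the union of the six flat sectors of opening angle $\arccos(-1/3)$ spanned by the $\binom{4}{2}$ pairs of edge tangents, gives $3\arccos(-1/3)$. The reduction to the tangent cone is also handled correctly --- since the faces are smooth CMC surfaces and (by Taylor's regularity theory, which the paper's definition of a foam presupposes) are smooth up to the edges, the graph representation with quadratic error terms makes the $O(s)$ relative corrections rigorous, and alternatively the citation of Allard's monotonicity covers the same step. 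The only point you pass over quickly is why the tangent cone of a face at a vertex is the \emph{convex} sector of angle $\arccos(-1/3)$ between $v_i$ and $v_j$ rather than its complement; this follows from the identification of the vertex singularity with Taylor's cone over the $1$-skeleton of the regular tetrahedron, which you do invoke, so it is a matter of making the appeal explicit rather than a gap. In short: your proof is sound and supplies a justification that the paper leaves implicit.
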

\section{Proof of the main theorem}\label{proofMain}
In proposition \ref{regular-lower} below we will be able to obtain a lower bound for the area of an extrinsic disc of  foam. First we need the following
\begin{proposition}\label{Upper}
Let $D_R(o)$ be an extrinsic disc of a foam $F$. Suppose that $R$ is a regular value of $r_o:F_i\setminus\{o\}\to\erre$ for every face $F_i$ of $F$, and suppose moreover the $\partial D_R$ meets transversally every edge of $F$. Then,
\begin{equation}
{\rm A}(D_R)\left(1-hR\right) \leq \frac{R}{2}\sum_{i=1}^{f_R}\int_{F_i\cap \partial D_R}\vert \nabla^{F_i}r_o\vert dL_i.
\end{equation}
\end{proposition}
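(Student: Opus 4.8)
The plan is to feed the adapted divergence theorem (Theorem \ref{divtheofoams}) the single test field
\[
X := \nabla^{\erre^3}\!\left(\tfrac12 r_o^2\right),
\]
which, once $o$ is placed at the origin, is nothing but the Euclidean position vector field $X(p)=p$. The point of this choice is that its tangential divergence on each face is controlled purely by the mean curvature, while its flux through $\partial D_R$ reproduces exactly the boundary integrand $|\nabla^{F_i} r_o|$ appearing on the right-hand side of the claimed inequality.

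First I would compute the interior integrand. On each face $F_i$ the tangential part $X_{F_i}$ equals $\nabla^{F_i}(\tfrac12 r_o^2)$, so $\operatorname{div} X_{F_i} = \triangle^{F_i}(\tfrac12 r_o^2)$. Writing $r_o^2 = x^2+y^2+z^2$ and using the product rule for the Laplacian together with Proposition \ref{laplacian-coordinates} (which gives $\triangle^{F_i} x_k = 2H_k$) yields
\[
\triangle^{F_i}\!\left(\tfrac12 r_o^2\right) = \sum_{k}\left(|\nabla^{F_i} x_k|^2 + x_k\,\triangle^{F_i} x_k\right) = \sum_k |\nabla^{F_i} x_k|^2 + 2\langle X, H\rangle .
\]
Here I would invoke the standard identity $\sum_k |\nabla^{F_i} x_k|^2 = 2$ (the tangential projections of the coordinate frame have squared norms summing to $\dim F_i$), so that the interior integrand collapses to $2 + 2\langle X, H\rangle$.

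Next I would evaluate the boundary term. On $\partial D_R = F\cap S_R(o)$ the position field satisfies $X = r_o\,\nabla^{\erre^3} r_o = R\,\nabla^{\erre^3} r_o$, and the outward unit conormal of $F_i$ along $F_i\cap \partial B_R$ is $\nu_i^R = \nabla^{F_i} r_o/|\nabla^{F_i} r_o|$ (well defined because $R$ is assumed a regular value). Since $\nabla^{F_i} r_o$ is the tangential projection of $\nabla^{\erre^3} r_o$, a one-line computation gives $\langle X, \nu_i^R\rangle = R\,|\nabla^{F_i} r_o|$. Feeding both computations into Theorem \ref{divtheofoams} produces the exact identity
\[
2\,{\rm A}(D_R) + 2\int_{D_R}\langle X, H\rangle\, dA = R\sum_{i=1}^{f_R}\int_{F_i\cap\partial D_R} |\nabla^{F_i} r_o|\, dL_i .
\]

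Finally I would bound the cross term. Pointwise on $D_R$ we have $\langle X, H\rangle \geq -|X|\,|H| = -r_o\,|H| \geq -hR$ by Cauchy--Schwarz, the mean curvature bound \eqref{eqMeanCond}, and $r_o\leq R$; integrating and rearranging the displayed identity gives precisely ${\rm A}(D_R)(1-hR)\leq \tfrac{R}{2}\sum_i \int_{F_i\cap\partial D_R}|\nabla^{F_i} r_o|\,dL_i$. The genuinely foam-specific input is hidden in the divergence theorem, where the Plateau balancing $\sum_i \nu_{i,j}=0$ along each edge kills the interior edge fluxes; everything else is the smooth CMC computation face by face. The main obstacle I anticipate is bookkeeping rather than depth: making sure the conormal is oriented so that the flux identity carries the correct sign, and that the single inequality $\langle X,H\rangle\ge -hR$ is applied in the direction that yields the factor $(1-hR)$ rather than $(1+hR)$.
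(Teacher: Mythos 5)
Your proposal is correct and follows essentially the same route as the paper: the authors also apply the foam divergence theorem to the radial position field (they use the potential $\phi=\tfrac14(x^2+y^2+z^2)$, i.e.\ half your field, which only rescales both sides), compute $\operatorname{div}X_{F_i}=1+\langle\vec r,H\rangle$ via Proposition \ref{laplacian-coordinates}, bound it below by $1-hR$, and identify the boundary flux as $\tfrac{r_o}{2}\Vert\nabla^{F_i}r_o\Vert$. The only cosmetic difference is that you state the flux balance as an exact identity before estimating the curvature term, whereas the paper bounds the divergence pointwise first; the content is identical.
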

\begin{proof}
Without loss of generality suppose that $o=0\in \erre^3$.
We are using the divergence theorem on foams (theorem \ref{divtheofoams}) with the vector field in $B_R(o)$ given by
\begin{equation}
X=\nabla^{\erre^3}\phi,
\end{equation}
where $\nabla^{\erre^3}$ is the gradient of $\erre^3$ and
\begin{equation}
\phi=\frac{1}{4}\left(x^2+y^2+z^2\right).
\end{equation}
$x,y,z$ being the coordinate functions in $\erre^3$.
Hence, on every face $F_i\subset F$
\begin{equation}
\begin{aligned}
X_{F_i}=&\nabla^{\erre^3}\phi-\langle \nabla^{\erre^3}\phi, n_i\rangle n_i=\nabla^{F_i}\phi_{\vert F_i}\\
=&\frac{1}{2}\left(x\nabla^{F_i}x+y\nabla^{F_i}y+z\nabla^{F_i}z\right).
\end{aligned}
\end{equation}
Thus,
\begin{equation}
\begin{aligned}
\textrm{div}X_{F_i} = &\frac{1}{2}\left(\langle \nabla^{F_i} x,\nabla^{F_i}x\rangle+\langle \nabla^{F_i} y,\nabla^{F_i}y\rangle +\langle \nabla^{F_i} z,\nabla^{F_i}z\rangle\right)\\
& +\frac{1}{2}\left(x\triangle^{F_i}x+y\triangle^{F_i}y+z\triangle^{F_i}z\right).
\end{aligned}
\end{equation}
Denoting by $H$ the mean curvature of the face $F_i$ and applying proposition \ref{laplacian-coordinates}
\begin{eqnarray}
\textrm{div}X_{F_i}&=&\frac{1}{2}\left(\vert \nabla^{F_i} x\vert^2+\vert \nabla^{F_i} y\vert^2 +\vert \nabla^{F_i} z\vert^2\right)+\langle \vec{r}, H\rangle\nonumber\\
&=&\frac{1}{2}\left(\vert \nabla^{\erre^3} x\vert^2-\langle\nabla^{\erre^3} x,n\rangle^2 + \vert \nabla^{\erre^3} y\vert^2\right.\\
&& \left.\quad-\langle\nabla^{\erre^3} y,n\rangle^2 + \vert \nabla^{\erre^3} z\vert^2-\langle\nabla^{\erre^3} z,n\rangle^2 \right)+\langle \vec{r}, H\rangle\nonumber\\
&=&\frac{1}{2}\left(3-\vert n\vert^2\right)+\langle \vec{r}, H\rangle=1+\langle \vec{r}, H\rangle\nonumber.
\end{eqnarray}
Here, $\vec{r}=(x,y,z)$. Then
\begin{equation}
\textrm{div}X_{F_i}\geq 1-\vert H\vert R\geq 1-hR
\end{equation}

Using theorem \ref{divtheofoams} and denoting $\partial D_R^i=F_i\cap \partial D_R$, therefore,
\begin{eqnarray}
\textrm{A}(D_R(o))(1-hR)&\leq&\sum_{i=1}^{f_R}\int_{F_i}{\rm div}X_{F_i}dA_i=\sum_{i=1}^{f_R}\int_{\partial D_R^i}\langle X,\nu_i^R\rangle dL_i\nonumber\\
&=&\sum_{i=1}^{f_R}\int_{\partial D_R^i}\langle \nabla^{\erre^3}\phi,\nu_i^R\rangle dL_i= \sum_{i=1}^{f_R}\int_{\partial D_R^i}\frac{r_o}{2}\Vert \nabla^{F_i}r_o\Vert dL_i\nonumber
\end{eqnarray}
\end{proof}

\begin{proposition}\label{regular-lower}
Let $F$ be a foam. For any $R>0$ denote by $h$ the maximum of the norm of the mean curvature vector field $n$ the faces of $D_R$, \emph{i.e.},
$$
h=\max_{x\in D_R}\Vert\vec H(x)\Vert.
$$
Then for any $\rho>0$ such that $\rho<R$
\begin{equation}\label{equsharp}
\frac{{\rm A}\left(D_R\right)}{R^2}e^{2hR}\geq\left( e^{\frac{1}{{\rm A}(D_R)}\underset{A_{\rho,R}\cap\mathcal {R}}{\int}\left(1-\Vert\nabla r_o\Vert^2\right)dV} \right)\frac{{\rm A}\left(D_\rho\right)}{\rho^2}e^{2h\rho}.
\end{equation}
\end{proposition}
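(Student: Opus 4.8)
The plan is to recast \eqref{equsharp} as a monotonicity estimate for the rescaled area quotient
\[
\Phi(t):={\rm A}(D_t(o))\,t^{-2}\,e^{2ht},
\]
and to obtain it by integrating a differential inequality for $\log\Phi$ between $\rho$ and $R$. Abbreviate ${\rm A}(t):={\rm A}(D_t(o))$ and introduce the two boundary functionals
\[
B(t):=\sum_i\int_{F_i\cap\partial D_t}\Vert\nabla^{F_i} r_o\Vert\,dL_i,
\qquad
C(t):=\sum_i\int_{F_i\cap\partial D_t}\frac{1-\Vert\nabla^{F_i} r_o\Vert^2}{\Vert\nabla^{F_i} r_o\Vert}\,dL_i\ge 0,
\]
the nonnegativity of $C$ coming from $\Vert\nabla^{F_i}r_o\Vert\le 1$. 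The elementary identity $s+\frac{1-s^2}{s}=\frac1s$ together with the Co-area formula of Proposition \ref{coarea-apply} shows that at every regular value $t$ one has ${\rm A}'(t)=B(t)+C(t)$, while Proposition \ref{Upper} applied with radius $t$ gives $B(t)\ge \tfrac{2(1-ht)}{t}{\rm A}(t)$.

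First I would compute the logarithmic derivative and use these two facts. At almost every $t$,
\[
\frac{d}{dt}\log\Phi(t)=\frac{{\rm A}'(t)}{{\rm A}(t)}-\frac{2}{t}+2h
=\frac{B(t)+C(t)}{{\rm A}(t)}-\frac{2}{t}+2h
\ge \frac{2(1-ht)}{t}-\frac{2}{t}+2h+\frac{C(t)}{{\rm A}(t)}=\frac{C(t)}{{\rm A}(t)},
\]
the crucial point being that the $B$-term cancels the Euclidean correction $-2/t+2h$ exactly, leaving the nonnegative excess $C(t)/{\rm A}(t)$. I would then integrate this pointwise inequality from $\rho$ to $R$. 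Since ${\rm A}(t)$ is nondecreasing, so is $\log{\rm A}$, and for a nondecreasing function the Fundamental Theorem of Calculus holds as the inequality $\log\Phi(R)-\log\Phi(\rho)\ge\int_\rho^R (\log\Phi)'(t)\,dt$ (the smooth part $-2\log t+2ht$ contributing exactly, and the singular part of $d\log{\rm A}$ being nonnegative). Hence $\log\Phi(R)-\log\Phi(\rho)\ge\int_\rho^R C(t)/{\rm A}(t)\,dt$.

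It remains to recognize the right-hand integral. Because $C(t)\ge 0$ and ${\rm A}(t)\le{\rm A}(R)$ for $t\le R$, I would bound $\int_\rho^R C(t)/{\rm A}(t)\,dt\ge {\rm A}(R)^{-1}\int_\rho^R C(t)\,dt$, and then apply the Co-area formula a second time --- now to the function $1-\Vert\nabla r_o\Vert^2$ over the regular part $A_{\rho,R}\cap\mathcal{R}$ of the annulus --- to identify $\int_\rho^R C(t)\,dt=\int_{A_{\rho,R}\cap\mathcal{R}}(1-\Vert\nabla r_o\Vert^2)\,dV$. Exponentiating the resulting inequality $\log\Phi(R)-\log\Phi(\rho)\ge {\rm A}(R)^{-1}\int_{A_{\rho,R}\cap\mathcal R}(1-\Vert\nabla r_o\Vert^2)\,dV$ gives precisely \eqref{equsharp}.

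The step I expect to be the main obstacle is the measure-theoretic bookkeeping rather than the algebra: ${\rm A}(t)$ need not be $C^1$, and both Proposition \ref{coarea-apply} and Proposition \ref{Upper} are only available at regular values $t$ --- those for which $\partial D_t$ misses the vertices and meets every face and edge transversally --- which Sard's theorem guarantees only for almost every $t$. The care required is to justify integrating the almost-everywhere inequality for $(\log\Phi)'$ against the monotone, possibly nonsmooth, function ${\rm A}$; this goes through precisely because monotonicity forces the singular part of the derivative to have the sign that helps the desired estimate.
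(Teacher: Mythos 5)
Your proof is correct and follows essentially the same route as the paper: both arguments rest on the Co-area formula (Proposition \ref{coarea-apply}), the boundary estimate of Proposition \ref{Upper}, the monotonicity bound ${\rm A}(D_t)\leq {\rm A}(D_R)$, and the almost-everywhere differentiability of the nondecreasing function $t\mapsto{\rm A}(D_t)$; you simply run the chain of inequalities from the logarithmic derivative of the area quotient toward the annulus integral, whereas the paper starts from the annulus integral and arrives at the logarithmic difference. Your explicit justification of the step $\log\Phi(R)-\log\Phi(\rho)\geq\int_\rho^R(\log\Phi)'\,dt$ via the sign of the singular part of a monotone function is in fact slightly more careful than the paper's one-line remark on this point.
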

\begin{proof}
Let us denote
$$
\mathcal{R}:=\left\{\begin{aligned}x\in F\,\vert\, & r_o(x)\text{ is a regular value and }\\ & \partial D_{r_0(x)} \text{ meets the edges of } F \text{ tranversally}\end{aligned}\right\}
$$
By using Co-area formula, proposition \ref{coarea-apply}, and proposition \ref{Upper}
\begin{eqnarray}
\frac{1}{{\rm A}(D_R)}\underset{A_{\rho,R}\cap\mathcal {R}}{\int}\left(1-\Vert\nabla r_o\Vert^2\right)dV=&\frac{1}{{\rm A}(D_R)}\int_\rho^R\left(\int_{\partial D_t}\frac{1-\Vert\nabla r_o\Vert^2}{\Vert \nabla r_o\Vert}d{\rm A}_t\right)dt\nonumber\\
\leq & \int_\rho^R\frac{1}{{\rm A}(D_t)}\left(\int_{\partial D_t}\left(\frac{1}{\Vert \nabla r_o\Vert}-\Vert\nabla r_o\Vert\right)d{\rm A}_t\right)dt\\
\leq & \int_\rho^R\frac{1}{{\rm A}(D_t)}\left(\frac{d}{dt}{\rm A}(D_t)-\frac{2}{t}{\rm A}(D_t)\left(1-ht\right)\right)dt\nonumber\\
=& \int_\rho^R\left(\frac{d}{dt}\log\left({\rm A}(D_t\right))-\frac{d}{dt}\left(\log\left(t^2\right)-2ht\right)\right)dt\nonumber
\end{eqnarray}
Taking into account that the function $t\to {\rm A}(D_t)$ is $C^\infty$ almost everywhere in $[\rho, R]$ and non-decreasind then,
\begin{equation}\label{eq3.9}
\log\left(\frac{\frac{{\rm A}\left(D_R\right)}{R^2}e^{2hR}}{\frac{{\rm A}\left(D_\rho\right)}{\rho^2}e^{2h\rho}}\right)\geq \frac{1}{{\rm A}(D_R)}\int_{A_{\rho,R}\cap\mathcal {R}}\left(1-\Vert\nabla r_o\Vert^2\right)dV\geq 0.
\end{equation}
Hence the proposition follows.
\end{proof}
By using the above proposition
$$
{\rm A}\left(D_R\right)\geq\left( e^{\frac{1}{{\rm A}(D_R)}\underset{A_{\rho,R}\cap\mathcal {R}}{\int}\left(1-\Vert\nabla r_o\Vert^2\right)dV} \right)\frac{{\rm A}\left(D_\rho\right)}{\pi\rho^2}e^{2h\rho}\cdot e^{-2hR}\pi R^2.
$$
Thence, letting $\rho$ tend to $0$ and using proposition \ref{density} we obtain
\begin{equation}\label{sharpequ2}
{\rm A}\left(D_R\right)\geq\left( e^{\frac{1}{{\rm A}(D_R)}\underset{D_R(o)\cap\mathcal {R}}{\int}\left(1-\Vert\nabla r_o\Vert^2\right)dV} \right)\theta(o)\cdot e^{-2hR}\pi R^2.
\end{equation}
this inequality leads us to the statement and proof of the main theorem:
\begin{thmain}
Let $F$ be a foam properly immersed satisfying the mean curvature bound \eqref{eqMeanCond}. Then the area of the extrinsic disc $D_R(o)$ of radius $R$ centered at $o$ is bounded from below by
\begin{equation}\label{MainIneq2}
\textrm{A}\left(D_R(o)\right)\geq \theta_o \cdot e^{-2hR}\cdot  \pi R^2,
\end{equation}
where
\begin{equation*}
\theta(o)=
\left\{
\begin{array}{lcl}
\theta_{v} = \frac{3}{\pi}\arccos(-1/3)&\textrm{if }&o\textrm{ is a vertex of } F\\ \\
\theta_{e} = \frac{3}{2}&\textrm{if }&o\textrm{ lies in theinterior of an edge of } F\\ \\
\theta_{f} = 1&\textrm{if }&o\textrm{ lies in the interior of a face of } F \quad .
\end{array}
\right.
\end{equation*}
Furthermore, equality in the  inequality (\ref{MainIneq2}) is attained if and only if every face of the extrinsic foam disc $D_R(o)$ is a piece of an affine  plane containing $o$.
\end{thmain}
\begin{proof}
Observe that inequality (\ref{MainIneq2}) follows from inequality (\ref{sharpequ2}). Moreover equality in inequality   (\ref{MainIneq2}) implies equality in (\ref{sharpequ2}), therefore
$$
\Vert\nabla r_o\Vert^2=1
$$
for every point in $D_R(o)\cap\mathcal {R}$. Then,  every face of the extrinsic disc $D_R(o)$ is a piece of an affine  plane containing $o$.
\end{proof}
\section{Applications}\label{aplic}
\subsection{Compact Foams}\label{secCompFoam}
We apply our results to the setting of  compact foams:
\begin{theorem}
Let $F\subset \mathbb{R}^3$ be a compact foam. Then given a point $o\in F$,
There exists $R_{\rm max}<\infty$ such that
$
F=D_{R_{\rm max}}(o),
$
there exists $h<\infty$ such that
$
\Vert \vec{H}\Vert(x)\leq h\quad \textrm{for any }\quad x\in F.
$
The maximum radius $R_{\rm max}$ and the supremum of the norm of the mean curvature vector field are related by
$$
R_{\rm max}\geq \frac{1}{h}
$$
Moreover, the foam has finite area ${\rm A}(F)<\infty$ and is bounded from below by
$$
{\rm A}(F)\geq \frac{\theta(o)}{e^2}\frac{\pi}{h^2}\cdot
$$
\end{theorem}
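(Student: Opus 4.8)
The plan is to treat the three structural assertions and the final numerical estimate separately, deriving the first two from compactness alone and obtaining the curvature--radius inequality as the geometric core. First I would note that a compact foam is bounded, so the continuous extrinsic distance $r_o$ attains a finite maximum $R_{\rm max}:=\max_{x\in F}r_o(x)<\infty$ and $F\subseteq \overline{B_{R_{\rm max}}(o)}$; hence $F=D_{R_{\rm max}}(o)$ once we discard the negligible farthest-point locus $\{r_o=R_{\rm max}\}$, and in particular $\textrm{A}(F)=\textrm{A}(D_{R_{\rm max}}(o))$. For the curvature bound I would use that a compact foam is a locally finite cell complex meeting only finitely many cells, so it has finitely many faces $F_1,\dots,F_N$; each $F_i$ is a CMC surface, so $\Vert\vec H\Vert$ is constant along $F_i$, and $h:=\max_{1\le i\le N}\Vert\vec H\Vert_{F_i}$ is a finite bound valid on all of $F$.

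The geometric heart is $R_{\rm max}\ge 1/h$, and the key observation is the behaviour of Proposition \ref{Upper} for radii just beyond $R_{\rm max}$. For any $R>R_{\rm max}$ the Euclidean sphere $\partial B_R(o)$ is disjoint from $F$, so $F_i\cap\partial D_R=F_i\cap\partial B_R(o)=\emptyset$ for every face and $D_R=F$. Both hypotheses of Proposition \ref{Upper} ($R$ a regular value and transversality at the edges) then hold vacuously, and its right-hand side is an integral over the empty set, hence zero. Proposition \ref{Upper} therefore reduces to $\textrm{A}(F)\,(1-hR)\le 0$ for every $R>R_{\rm max}$. Letting $R\downarrow R_{\rm max}$ and using $\textrm{A}(F)>0$ yields $1-hR_{\rm max}\le 0$, that is $R_{\rm max}\ge 1/h$.

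With these three facts in hand the area estimate follows immediately from the Main Theorem \ref{thmainA}. Since $1/h\le R_{\rm max}$, the disc $D_{1/h}(o)$ is contained in $F$, so $\textrm{A}(F)\ge \textrm{A}(D_{1/h}(o))$; applying inequality \eqref{MainIneq} at the radius $R=1/h$ gives
\[
\textrm{A}(F)\;\ge\;\textrm{A}(D_{1/h}(o))\;\ge\;\theta(o)\,e^{-2h\cdot(1/h)}\,\pi\,(1/h)^{2}\;=\;\frac{\theta(o)}{e^{2}}\,\frac{\pi}{h^{2}},
\]
which is the claimed bound.

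The only genuinely nontrivial step is $R_{\rm max}\ge 1/h$, and its crux is recognizing that Proposition \ref{Upper} stays valid—now with a vanishing boundary term—precisely when the extrinsic disc has already swallowed the whole foam. The points I would be most careful about are the supporting structural claims: the finiteness of the face count (hence of $h$), which rests on the local finiteness forced by the Plateau rules together with compactness, and the fact that the farthest-point locus $\{r_o=R_{\rm max}\}$ is $\textrm{A}$-negligible, so that $\textrm{A}(F)=\textrm{A}(D_{R_{\rm max}}(o))$ and the reduction to Theorem \ref{thmainA} is legitimate.
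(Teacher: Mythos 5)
Your proof is correct and follows essentially the same route as the paper: both hinge on Proposition \ref{Upper} with a vanishing boundary term to obtain $R_{\rm max}\geq 1/h$, and then apply the Main Theorem at radius $R=1/h$. The only difference is that you evaluate at radii $R>R_{\rm max}$, where $\partial D_R$ is literally empty, whereas the paper takes regular values $R_i\uparrow R_{\rm max}$ and argues that ${\rm L}(\partial D_{R_i})\to 0$; your variant is marginally cleaner since it avoids justifying that limit.
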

\begin{proof}
By proposition \ref{Upper} we know that
$$
{\rm A}(D_R) (1-hR)\leq {\rm L}(\partial D_R)
$$
where here ${\rm L}(\partial D_R)$ denotes the length of the boundary $\partial D_R$.  Let now $\{R_i\}$ be a sequence for regular values of the extrinsic distance converging to $R_{\rm max}$ then
$$
{\rm A}(D_{R_{\rm max}}) (1-hR_{\rm max})=\lim_{i\to\infty } {\rm A}(D_{R_i}) (1-hR_i)\leq {\rm L}(\partial D_{R_i})=0.
$$
Hence we obtain
$$
R_{\rm max}\geq \frac{1}{h}
$$
Finally by using the main theorem,
$$
{\rm A}(F)\geq {\rm A}(D_{\rm max})\geq {\rm A}(D_{1/h})\geq  \frac{\theta(o)}{e^2}\frac{\pi}{h^2}.
$$
\end{proof}

\begin{simpexamples}
The simplest example of a compact foam is a sphere $S_R$ of radius $R$ in $\mathbb{R}^3$. From any base point on the sphere we have  $R_{\rm max }=2R$ so that
$$
R_{\rm max}=2R=\frac{2}{h}\geq \frac{1}{h} ,
$$
which is in complete agreement with the inequality $R_{\rm max}\geq \frac{1}{h}$. On the other hand, since
$$
{\rm A}(S_R)=4\pi R^2=\frac{4\pi}{h^2} \geq \frac{\theta(o)}{e^2}\frac{\pi}{h^2}=\frac{\pi}{e^2h^2}.
$$
Another simple example of a compact foam is a double bubble (see figure \ref{doubleBoubble}). The maximum of the mean curvature is
$$
h=\frac{1}{r_2}.
$$
Hence, by the above corollary.
$$
R_{\rm max}\geq r_2
$$
If we choose the center in the point $P$, it is easy to check that $R_{\rm max}=2r_1\geq r_2$. The lower bound for the area given by the above corollary is
$$
A(F)\geq \frac{3}{2e^2}\pi r_2^2.
$$
\end{simpexamples}

\begin{figure}
\centerline{
\includegraphics[scale=0.5]{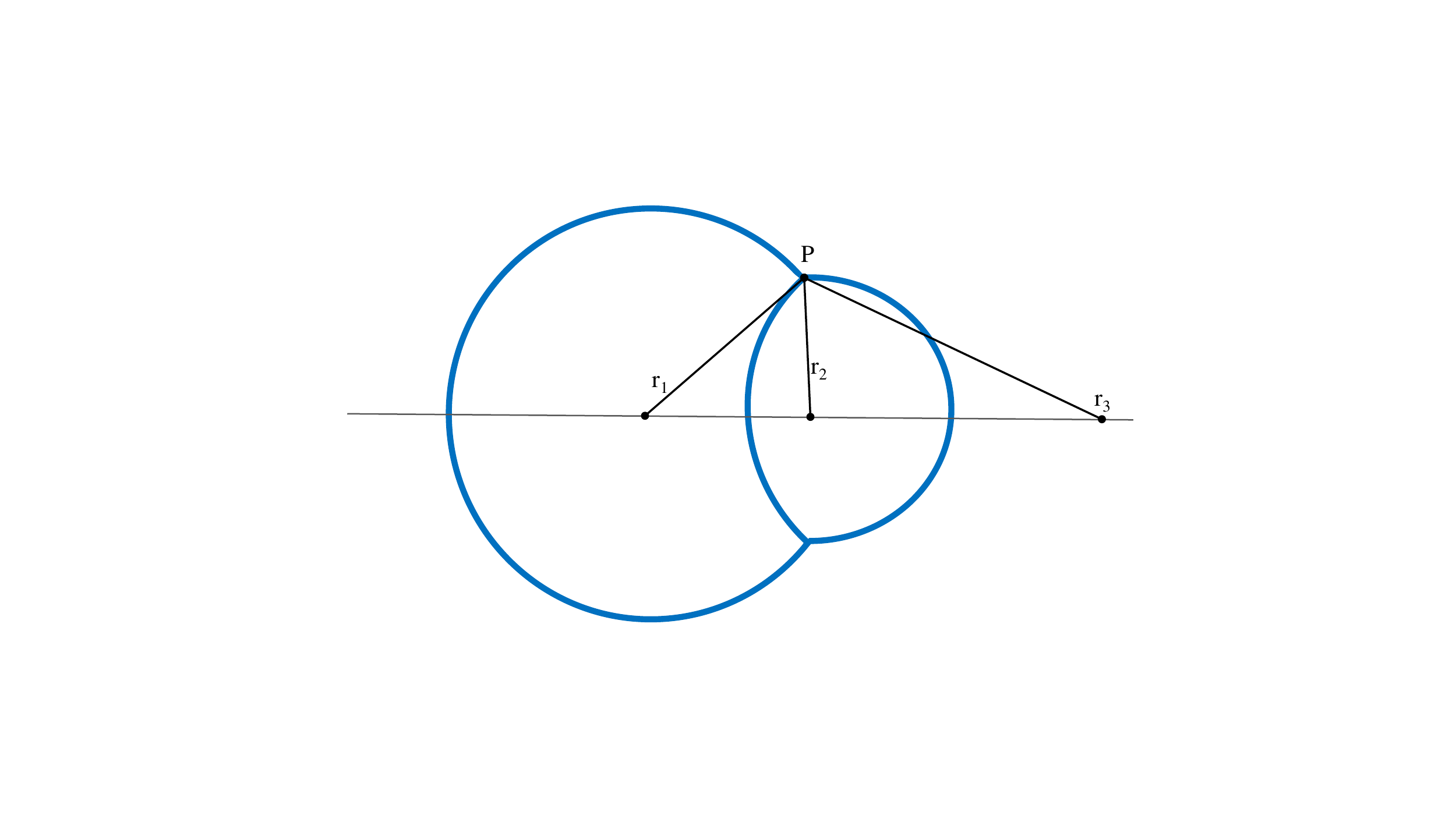}}
\begin{center}
\caption{\small{Double bubble foam.}}\label{doubleBoubble}
\end{center}
\end{figure}

\subsection{Lower bound for the area of a cell of a Kelvin foam}\label{secKelvin}
The accuracy of the lower bounds for the area of a domain of a foam  given by any method based on the main theorem, relies drastically on the number and the distribution of vertex (or face, or edge) points that are chosen. If we choose very few points the inequalities given by the main theorem are far away to be sharp, but we can nevertheless obtain a lower bound.  In this subsection, as an example of an application, we will show how to use the main theorem directly to obtain a lower bound for the area of a foam. In this case we will use a  cell of the Kelvin foam (where lower bounds for the area function are known).

The Kelvin foam arises from the problem of partitioning space into equal-volume cells, using the least interface area.  Kelvin described his foam as a relaxation of the Voronoi partition for the body-centered cubic lattice, whose cells are congruent truncated octahedra. As the foam relaxes, see \cite{Kusner96}, the symmetry present in the Voronoi partition is preserved and hence the vertices are fixed by symmetry. The square faces remain in their mirror planes, although the edges bend within these planes. The diagonals of the hexagons remain fixed along axes of rotational symmetry, while the hexagons become shaped like monkey-saddles (see \cite{Kusner96} and figure \ref{extball}).  Observe that the resulting foam is a minimal foam, i.e. it has $h=0$.

In this subsection we are interested in a lower bound for the contribution of each cell to the total area of the foam. Observe that each face contributes in half part of its area to the total area of the foam (since each face is in contact with two cells). We consider an extrinsic disc centered at the center of each hexagonal face (these centers remain fixed under the foam relaxation). If we denote by $a$ the (extrinsic) distance between two adjacent vertices, and if we choose the radii of such extrinsic discs as $R_1=\frac{a}{2}$ (less than $\frac{3}{4}a$, the maximal distance between the center of the hexagons and the planes where the square faces lie), and apply the main theorem, then we obtain the following lower bound for the area of these extrinsic foam discs:
\begin{equation}
A_1=\overset{\text{number of hexagonal faces}}{8}\cdot \frac{1}{2}\cdot \pi \left(\frac{a}{2}\right)^2
\end{equation}

Now we can put an extrinsic disc of radius $R_2=\frac{a}{2}$ in each vertex, but now these discs contribute only the fourth part to the total area because identical cells meet at fours at vertices, and this give us the following lower bound
\begin{equation}
A_2=\overset{\text{number of vertices}}{24}\cdot \frac{1}{4}\cdot 3{\rm arccos}(-1/3)\left(\frac{a}{2}\right)^2
\end{equation}

Since the square faces remain in their mirror planes but with edges bent within these planes, the original square (of area $a^2$) is contained in the final relaxed square face. Then, we obtain the lower bound
\begin{equation}
A_3=\overset{\text{number of square faces}}{6}\cdot \frac{1}{2}\left(a^2-\pi\left(\frac{a}{2}\right)^2\right)
\end{equation}
where we have subtracted the (yet computed) contribution of the discs of the four vertices. Finally we obtain
\begin{equation}
\text{ Area of a cell of the Kelvin foam}\geq A_1+A_2+A_3\approx 12.3832 a^2.
\end{equation}

But observe that by using a ``slicing argument'' the best lower bound estimation is obtained in \cite{Kusner96} as
\begin{equation}
\text{ Area of a cell of the Kelvin foam}>6\left(\sqrt{\frac{3}{2}}+1\right)a^2\approx 13.3485 a^2.
\end{equation}

\subsection{Lower bounds for the cost function}\label{secCost}
Given a foam $F\subset \erre^3$, and a domain $\Omega\subset \erre^3$ such that $F\cap \Omega\neq \emptyset$ (with perhaps $\Omega=\erre^3$), studying the scale-invariant \emph{cost} function we get -- from an isoperimetric point of view -- an estimate for how effective the foam fills the domain $\Omega$. Here
\begin{equation}
\mu(\Omega):=\frac{(\textrm{A}(F\cap \Omega)/n)^3}{(\textrm{V}(\Omega)/n)^2}
\end{equation}
where $n$ denotes the number of cells inside $\Omega$. In the particular case of a minimal foam we can state a lower bound for this cost function reading off only information about the vertices and their distribution.

Given such a domain $\Omega$, let us use the following notation:
\begin{equation}
\begin{aligned}
n&:=\text{ number of cells inside }\Omega\\
v_\Omega&:= \text{ number of vertices inside }\Omega\\
\overline v_\Omega&:= \frac{v_\Omega}{n}\\
d&:=\text{ minimal extrinsic distance between vertices}\\
\nu_{\Omega}&:=\frac{v_\Omega}{{\rm V}(\Omega)}
\end{aligned}
\end{equation}

\begin{corollary}Let $F\subset \erre^3$ be a minimal foam in $\erre^3$, hence for any domain  $\Omega\subset \erre^3$
\begin{equation}
\mu(\Omega)\geq \overline v_\Omega \nu_\Omega^2A_0^3(d),
\end{equation}
where $A_0$ is given by
$$
A_0(d)=\theta_v \pi \left(\frac{d}{2}\right)^2, \quad \theta_v=\frac{3}{\pi}{\rm arccos}\left(-\frac{1}{3}\right).
$$
\end{corollary}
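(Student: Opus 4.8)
The plan is to reduce the scale-invariant inequality to a bare area estimate, and then to produce that estimate by packing disjoint extrinsic discs, one at each vertex of $\Omega$, invoking the Main Theorem in the minimal case $h=0$.

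First I would unwind the definitions. Writing $\mu(\Omega)=\area(F\cap\Omega)^3/\left(n\,{\rm V}(\Omega)^2\right)$ and noting that
$$\overline v_\Omega\,\nu_\Omega^2\,A_0^3=\frac{v_\Omega}{n}\cdot\frac{v_\Omega^2}{{\rm V}(\Omega)^2}\cdot A_0^3=\frac{\left(v_\Omega A_0\right)^3}{n\,{\rm V}(\Omega)^2},$$
the common factor $1/\left(n\,{\rm V}(\Omega)^2\right)$ cancels, so the claimed inequality is equivalent to the single area bound
$$\area(F\cap\Omega)\geq v_\Omega\,A_0(d).$$
Thus the whole corollary follows once this clean lower bound on the foam area inside $\Omega$ is established.

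Second, I would produce that area bound by a disjoint-packing argument. Because $F$ is minimal we have $h=0$, and the Main Theorem collapses to $\area(D_R(v))\geq \theta_v\,\pi R^2$ at every vertex $v$. Choosing $R=d/2$ gives $\area(D_{d/2}(v))\geq \theta_v\,\pi(d/2)^2=A_0(d)$ for each of the $v_\Omega$ vertices inside $\Omega$. The decisive geometric observation is that the Euclidean balls $B_{d/2}(v)$ centered at distinct vertices are pairwise disjoint: a common point would, by the triangle inequality, force two vertices to lie at distance strictly less than $d$, contradicting the definition of $d$ as the minimal extrinsic vertex distance. Consequently the discs $D_{d/2}(v)=F\cap B_{d/2}(v)$ are pairwise disjoint pieces of $F$, and summing their areas yields $\area(F\cap\Omega)\geq \sum_v \area(D_{d/2}(v))\geq v_\Omega\,A_0(d)$. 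Feeding this into the reduction of the first step completes the proof.

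The step I expect to require the most care is the boundary effect. A disc $D_{d/2}(v)$ centered at a vertex $v$ close to $\partial\Omega$ may protrude outside $\Omega$, in which case $\area\left(D_{d/2}(v)\cap\Omega\right)$ can be strictly smaller than $A_0(d)$ and the naive summation would over-count the area genuinely lying in $F\cap\Omega$. To keep the inequality exact I would arrange the setup so that every vertex counted by $v_\Omega$ sits at extrinsic distance at least $d/2$ from $\partial\Omega$ — which is automatic, for example, when $\Omega$ is a union of complete cells of a periodic foam, so that each full disc is contained in $F\cap\Omega$. Otherwise the bound is to be read as holding up to a lower-order boundary contribution that becomes negligible as $\Omega$ grows. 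Confirming that the packing is lossless in the intended periodic or large-domain regime is the single delicate point; the reduction and the disjointness argument are then routine.
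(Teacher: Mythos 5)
Your proposal is correct and follows essentially the same route as the paper: the paper's proof is precisely the one-line chain $\mu(\Omega)\geq \tfrac{1}{n}(v_\Omega\,\theta_v\pi (d/2)^2)^3/\mathrm{V}(\Omega)^2$ obtained by applying the Main Theorem with $h=0$ to a disjoint disc at each vertex and regrouping the factors, exactly your two steps (the paper's displayed proof even writes $\theta_v\pi d^2$ where the statement's $A_0(d)=\theta_v\pi(d/2)^2$ requires radius $d/2$ — your version is the consistent one). Your closing caveat about discs protruding through $\partial\Omega$ is a genuine subtlety that the paper's proof silently ignores, and your resolution (vertices at distance at least $d/2$ from the boundary, e.g.\ in the periodic setting) is the right way to make the packing lossless.
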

\begin{proof}
By using the main theorem,
\begin{equation}
\begin{aligned}
\mu(\Omega)= \frac{1}{n}\frac{(\textrm{A}(\Omega))^3}{(\textrm{V}(\Omega))^2}\geq \frac{1}{n}\frac{(v_\Omega \theta_v \pi d^2)^3}{(\textrm{V}(\Omega))^2} =\left(\frac{v_\Omega}{n}\right)\left(\frac{v_\Omega }{\textrm{V}(\Omega)}\right)^2(\theta_v \pi d^2)^3.
\end{aligned}
\end{equation}

\end{proof}

A classical related problem arising from Kelvin's problem is concerned with the study of the optimization of the cost function under partitions of the space in periodic domains (see \cite{Kusner96}). A \emph{partition} of space is a division of $\erre^3$ into disjoint cells. We are mainly interested in the surfaces forming the interface between cells. The partitions that we consider will be periodic with respect to some lattice, with some number $n$ of cells in each periodic domain. Obviously, we can use periodic minimal foams as examples of such partitions of space.


In such a case by using the lower bound given in \cite{Kusner92} for the average number of vertices in a periodic minimal foam,
$$
\overline v_\Omega \geq 24,
$$
\begin{corollary}
Let $F\subset \erre^3$ be a minimal and periodic foam in $\erre^3$, hence for any domain  $\Omega\subset \erre^3$
\begin{equation}
\mu(\Omega)\geq 24\, \nu_\Omega^2A_0^3(d).
\end{equation}
\end{corollary}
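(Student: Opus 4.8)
The plan is to derive this corollary directly from the preceding corollary by substituting the universal combinatorial bound on the average number of vertices. The previous corollary already establishes, for any minimal foam $F\subset\erre^3$ and any domain $\Omega\subset\erre^3$, the estimate
\begin{equation*}
\mu(\Omega)\geq \overline v_\Omega\,\nu_\Omega^2\,A_0^3(d),
\end{equation*}
so the present statement is obtained by replacing the factor $\overline v_\Omega=v_\Omega/n$ with its lower bound. Thus the first step is simply to recall this inequality and to observe that its remaining factors are nonnegative.

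Next I would record the sign information explicitly. By definition $\nu_\Omega=v_\Omega/{\rm V}(\Omega)\geq 0$, and $A_0(d)=\theta_v\,\pi (d/2)^2\geq 0$ since $\theta_v=\frac{3}{\pi}\arccos(-1/3)>0$ and $d>0$ is the minimal extrinsic distance between vertices. Consequently the product $\nu_\Omega^2\,A_0^3(d)$ is nonnegative, so the right-hand side of the previous corollary is monotone nondecreasing in $\overline v_\Omega$. This monotonicity is what lets us insert a lower bound for $\overline v_\Omega$ without reversing the inequality.

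The decisive input is the hypothesis that $F$ is not only minimal but also \emph{periodic}, which allows us to invoke the bound $\overline v_\Omega\geq 24$ quoted from \cite{Kusner92} for the average number of vertices in a periodic minimal foam. Combining this with the monotonicity observed above gives
\begin{equation*}
\mu(\Omega)\geq \overline v_\Omega\,\nu_\Omega^2\,A_0^3(d)\geq 24\,\nu_\Omega^2\,A_0^3(d),
\end{equation*}
which is exactly the asserted estimate.

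The only genuine content here lies outside the elementary manipulation: it is the combinatorial/topological fact $\overline v_\Omega\geq 24$ for periodic minimal foams, which rests on Euler--Poincar\'e counting over a fundamental domain together with the Plateau vertex law forcing every vertex to be the common meeting point of exactly four edges. I would therefore treat this as the main obstacle and rely on \cite{Kusner92} for it, while the rest of the argument is a one-line substitution justified by nonnegativity.
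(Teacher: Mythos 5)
Your proof is correct and follows exactly the paper's (implicit) argument: the paper likewise obtains this corollary by inserting the bound $\overline v_\Omega\geq 24$ from \cite{Kusner92} into the preceding corollary $\mu(\Omega)\geq \overline v_\Omega\,\nu_\Omega^2 A_0^3(d)$, with the nonnegativity of the remaining factors being the only (trivial) thing to check. Your explicit monotonicity remark is a harmless elaboration of what the paper leaves unsaid.
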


\subsection{Lower bound for the pressure on minimal foams}\label{secPreasure}
A problem which is related to the isoperimetric properties is concerned with the pressure distribution in a foam. We consider a dried foam in mechanical equilibrium with an ideal gas in the bubble chambers. The gas in each chamber is assumed to satisfy the ideal gas equation of state and we assume that the entire foam is held at constant temperature. In such a case the ratio of area per volume is fixed by the physical constraints involving the temperature, pressure, and surface tension. Indeed, if we have a minimal foam $F\subset \Omega$ inside of a region $\Omega\subset \erre^3$, since the foam is minimal, each cell is at the same pressure $p_{\rm in}$ but there is also an extrinsic pressure $p_{\rm ext}$ coming from the boundary $\partial \Omega$ and those two pressures are related as follows (see \cite{Ross69,Aref2000, Fortes2001, Fortes2003, Fortes2007}):
\begin{equation}
p_{\rm int}-p_{\rm ext}=\frac{3}{2} \sigma \frac{\textrm{A}(F)}{\textrm{V}(\Omega)}
\end{equation}
where $\sigma$ is the surface tension assumed to be homogeneous throughout the foam. Hence,
\begin{equation}
p_{\rm int}-p_{\rm ext}=\frac{3}{2} \sigma \left(\frac{n}{\textrm{V}(\Omega)}\mu(\Omega)\right)^\frac{1}{3}
\end{equation}
The pressure inside of each cell in the foam can be therefore estimated from bellow using only the vertices as
\begin{equation}
p_{\rm int}\geq p_{\rm ext}+\frac{3}{2} \sigma \nu_\Omega A_0(d).
\end{equation}

\subsection{Area and Extrinsic Vertex Area}\label{secEva}
The main result allows us to construct an algorithm to estimate lower bounds for the area of any  domain $\Omega\subset F$ of a foam $F$. Given a finite set of points $X = \{x_{1}, x_{2}, \cdots \,x_N\} \subset \Omega$. This set $X$ has associated the following subset of $\erre^N$
\begin{equation}\label{Omega}
\begin{aligned}
D_\Omega(X):=\Big\{& (r_1,r_2,\cdots,r_N)\subset \erre^N\,\vert\,B_{r_i}(x_i)\subset \Omega\,\textrm{ for }i=1,\cdots,N\\
& \textrm{ and} \quad B_{r_{i}}(x_{i}) \cap B_{r_{j}}(x_{j}) = \emptyset  \,\text{ for all }\, i \neq j\Big\}.
\end{aligned}
\end{equation}
We define the extrinsic vertex area of $X$ in $\Omega$ as follows:


\begin{definition}\label{def1} Given a set of isolated points $X$ in $\Omega\subset\erre^3$, the \emph{lower extrinsic vertex area} of $X$ with $h$ weight  is given by
\begin{equation}
\begin{aligned}
\eva(X,\Omega,h) := \max_{(r_1,\cdots,r_N)\in D_\Omega(X)}  \left\{ \sum_{i} e^{-2hr_i}\theta_{v}\cdot\pi r_{i}^{2} \right\}
\end{aligned}
\end{equation}
where $D_\Omega(X)$ is given by definition (\ref{Omega}) and $\theta_v=\frac{3}{\pi}\arccos(-1/3)$ . The \emph{extrinsic vertex area}  of $X$ with $h$ weight is given by
\begin{equation}
\evA(X, \Omega,h):= \max_{Z \, \subset \, X}\left\{ \eva(Z, \Omega,h) \right\}.
\end{equation}
\end{definition}

Observe that the extrinsic vertex area associates an area to a set of points. By using our  main theorem we can make use of the extrinsic vertex area associated to a set of points $X\subset\Omega$ to obtain a lower bound for the area of a piece of the foam $\Omega\subset F$.

\begin{theorem}\label{teo5.2}
Let $F\subset \erre^3$ be a foam and let $\Omega\subset F$ be a compact subset of $F$. Denote by $\mathcal{V}_\Omega$ the set of vertices of $\Omega$, and denote by
$$
h=\max_{x\in \Omega}\Vert\vec H\Vert(x)
$$
the maximum  of the norm of the mean curvature vector field on the faces of $\Omega$. Then, the area of $\Omega$ is bounded from below by
\begin{equation}
\textrm{A}(\Omega)\geq \evA(X,\mathcal{V}_\Omega,h),
\end{equation}
where $\evA(X,\mathcal{V}_\Omega,h)$ is the extrinsic vertex area of the vertices points $\mathcal{V}_\Omega$ with $h$ weight.
\end{theorem}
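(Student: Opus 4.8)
The plan is to reduce the global lower bound for $\area(\Omega)$ to a finite sum of the per-vertex local estimates furnished by the Main Theorem, and then to take the supremum over admissible configurations of radii and over subsets of the vertex set, which is exactly what $\evA$ encodes. The key geometric input is that the extrinsic discs we place at distinct vertices can be made pairwise disjoint, so that their areas add without overcounting; the key analytic input is the Main Theorem, which bounds each disc's area from below.

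First I would fix any subset $Z=\{x_{1},\dots,x_{k}\}\subset\mathcal{V}_{\Omega}$ of the vertices and any radius vector $(r_{1},\dots,r_{k})\in D_{\Omega}(Z)$. By definition of $D_{\Omega}(Z)$, each ball $B_{r_{i}}(x_{i})$ is contained in $\Omega$ and the balls are pairwise disjoint, hence the extrinsic foam discs $D_{r_{i}}(x_{i})=F\cap B_{r_{i}}(x_{i})$ are pairwise disjoint subsets of $\Omega$. By additivity of area over disjoint measurable pieces,
\begin{equation}
\area(\Omega)\;\geq\;\sum_{i=1}^{k}\area\!\left(D_{r_{i}}(x_{i})\right).
\end{equation}
Since each $x_{i}$ is a vertex of $F$, the Main Theorem with density $\theta(x_{i})=\theta_{v}$ and mean curvature bound $h=\max_{x\in\Omega}\Vert\vec H\Vert(x)$ gives $\area(D_{r_{i}}(x_{i}))\geq e^{-2hr_{i}}\,\theta_{v}\,\pi r_{i}^{2}$. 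Substituting, $\area(\Omega)\geq\sum_{i}e^{-2hr_{i}}\theta_{v}\pi r_{i}^{2}$.

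The remaining step is purely a matter of optimizing the right-hand side. Since the inequality above holds for every admissible radius vector in $D_{\Omega}(Z)$, I would take the maximum over $(r_{1},\dots,r_{k})\in D_{\Omega}(Z)$ to obtain $\area(\Omega)\geq\eva(Z,\mathcal{V}_{\Omega},h)$; and since it holds for every subset $Z\subset\mathcal{V}_{\Omega}$, I would then take the maximum over all such $Z$ to conclude $\area(\Omega)\geq\evA(X,\mathcal{V}_{\Omega},h)$, matching Definition \ref{def1}. Here I interpret the ambient set in the definition as $\mathcal{V}_{\Omega}$, so that the balls are constrained to lie in $\Omega$ and be disjoint; the compactness of $\Omega$ guarantees finitely many vertices and that the maxima are attained.

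The main obstacle is not in the chain of inequalities, which is routine, but in verifying that the disjointness hypothesis in $D_{\Omega}(Z)$ genuinely licenses the additivity step without loss: one must check that the faces, edges, and their intersections with several disjoint balls contribute no shared area, and that a vertex $x_{j}$ lying on the boundary $\partial B_{r_{i}}(x_{i})$ of another disc (a measure-zero event) does not spoil additivity. Since the discs are indexed by disjoint \emph{open} balls and area is a measure, overlaps of area zero are harmless, so this reduces to confirming that $D_{\Omega}(Z)$ forces genuinely disjoint open balls, which is immediate from its definition.
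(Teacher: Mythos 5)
Your argument is correct and is exactly the intended one: the paper states Theorem \ref{teo5.2} without a written proof, deriving it implicitly from the Main Theorem in just the way you describe — pairwise disjoint extrinsic discs centered at vertices, each bounded below by $\theta_v e^{-2hr_i}\pi r_i^2$, summed and then optimized over admissible radius vectors and over subsets of $\mathcal{V}_\Omega$. Your side remarks (that the relevant containment is really $F\cap B_{r_i}(x_i)\subset\Omega$ so that the mean curvature bound taken over $\Omega$ suffices on each disc, and that disjointness of the open balls makes the additivity step harmless) correctly address the only points where care is needed.
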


\begin{remark}
Observe moreover, that since by definition
\begin{equation}
\evA(X,\Omega,h) \geq \eva(X, \Omega,h)
\end{equation}
then, under the hypothesis of the above theorem we can state,
\begin{equation}
\textrm{A}(\Omega)\geq \eva(X,\mathcal{V}_\Omega,h).
\end{equation}
\end{remark}
\begin{remark}
Given a set of $N$-isolated points $X$ into  $\Omega\subset\erre^3$, the corresponding $D_\Omega(X)$ can be understood as the set of admissible radii in order to place the extrinsic foam discs to estimate the largest lower bound for the area of such set of extrinsic discs. But observe that since the function
$$
f:\erre\to\erre,\quad r\to f(r)=e^{-2hr}r^2
$$
is increasing only in the interval $[0,\frac{1}{h})$ (with $[0,\infty)$ if $h=0$) then we can define the \emph{maximal radius} $r_{\rm max}:=\frac{1}{h}$ and the \emph{maximal hypercube} $B^\infty_{r_{\rm max} }:=[0,r_{\rm max}]^N$ in such a way that
$$
\eva(X,\Omega,h)=\max_{(r_1,\cdots,r_N)\in D_\Omega(X)\cap B^\infty_{r_{\rm max} }}  \left\{ \sum_{i} e^{-2hr_i}\theta_{v}\cdot\pi r_{i}^{2} \right\}
$$
Stated otherwise, the maximum in definition \ref{def1} is obtained in
$$
(r_1,\cdots,r_N)\in D_\Omega(X)\quad\text{with }\, r_i\leq r_{\rm max}=\frac{1}{h}\quad \text { for } i=1,\cdots,N.
$$
The presence of mean curvature on the faces of a foam forces therefore our method to make use only of small extrinsic discs.
\end{remark}

\subsubsection{From the matrix of distances to the extrinsic vertex area}
Let now $X=\{x_1,\cdots,x_N\}$ be a set of $N$ isolated points inside a compact subset $\Omega\subset \erre^3$ with boundary $\partial \Omega$.  We can define the \emph{matrix of distances} $\mathcal{D}_\Omega(X)$ of $X$ in $\Omega$ to be the $(N+1)\times( N+1)$ matrix  given by
$$
\left[\mathcal{D}_\Omega(X)\right]_{i,j}:=
\left\{\begin{array}{cclcr}
\dist_{\erre^3}\left(x_i,x_j\right)&{\rm if}&i\leq N&{\rm and}&j\leq N,\\
\dist_{\erre^3}\left(x_i,\partial \Omega\right)&{\rm if}&i\leq N&{\rm and}&j= N+1,\\
\dist_{\erre^3}\left(x_j,\partial \Omega\right)&{\rm if}&i=N+1&{\rm and}&j\leq N,\\
0&{\rm if}& i=N+1&{\rm and}&j=N+1.
\end{array}
\right.
$$

From this  matrix we can define as well the \emph{reduced  matrix of distances} to be the $N\times N$ matrix given by
$$
\left[d_\Omega(X)\right]_{i,j}:=\min\left\{ \left[\mathcal{D}_\Omega(X)\right]_{i,j},\left[\mathcal{D}_\Omega(X)\right]_{i,N+1},\left[\mathcal{D}_\Omega(X)\right]_{j,N+1},\frac{1}{h} \right\}
$$
if $N>1$, or by
$$
\left[d_\Omega(X)\right]_{1,1}:=\min\left\{ \dist_{\erre^3}(x_1,\partial \Omega),\frac{1}{h} \right\}
$$
if $N=1$. Observe that
\begin{equation}\label{polytope}\begin{array}{llr}
D_\Omega(X)\cap B^\infty_{r_{\rm max} }=\bigg\{r\in \erre^N\,\vert\,&  0\leq r_i+r_j\leq \left[d_\Omega(X)\right]_{i,j}, &\forall i \text{ and } \forall j\neq i\bigg\}
\end{array}
\end{equation}
if $N>1$, and
$$
D_\Omega(X)\cap B^\infty_{r_{\rm max} }=\left[0, \left[d_\Omega(X)\right]_{1,1}\right]
$$
if $N=1$.
By using inequalities (\ref{polytope}), $D_\Omega(X)\cap B^\infty_{r_{\rm max} }$ is a polytope. But,  since
$$
\eva:\erre^N\to\erre,\quad \eva(\vec z)=\sum_{i=1}^N\pi e^{-2hz_i}\theta_vz_i^2
$$
has no critical points in $D_\Omega(X)\cap B^\infty_{r_{\rm max} }\setminus\{\vec 0\}$, the maximum of the definition of the extrinsic vertex area is attained in the boundary
$$
\eva(X,\Omega,h)=\max_{\vec r\in \partial \left(D_\Omega(X)\cap B^\infty_{r_{\rm max} }\right)}\left\{\sum_{i=1}^N\pi e^{-2hr_i}\theta_vr_i^2\right\}.
$$
and hence,
$$
\evA(X,\Omega,h)=\max_{Z\subset X }\left(\max_{\vec r\in \partial \left(D_\Omega(Z)\cap B^\infty_{r_{\rm max} }\right)}\left\{\sum_{i=1}^N\pi e^{-2hr_i}\theta_vr_i^2\right\}\right),
$$
where $Z$ ranges on subsets of $X$. Moreover if
$$
\left[d_\Omega(X)\right]_{i,j}\leq \frac{2-\sqrt{2}}{h}, \quad \forall i,\forall j\quad,
$$
the function $\eva$ is a convex function in the polytope $D_\Omega(X)\cap B^\infty_{r_{\rm max} }$, and hence its maximum is attained in the vertices of $D_\Omega(X)\cap B^\infty_{r_{\rm max} }$. Then, in the case $\left[d_\Omega(X)\right]_{i,j}\leq \frac{2-\sqrt{2}}{h}$ (minimal foams for instance) an estimation of the $\evA$ can be obtained using the \emph{vertex enumeration problem}. In section \ref{num-alg} we will provide an algorithm using precisely the approach of the vertex enumeration problem.

 Observe that since $Z$ is a subset of $X$, then the matrix of distances $\mathcal{D}_\Omega(Z)$ can be obtained from the matrix $\mathcal{D}_\Omega(X)$ removing several rows and the corresponding columns.

\begin{figure}
\centerline{
\includegraphics[scale=0.27]{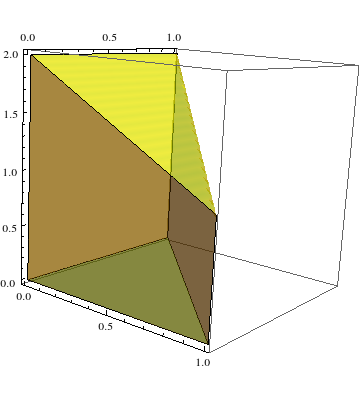}}
\begin{center}
\caption{\small{Domain $D_\Omega(X)$ from example \ref{example-matrix}.}}\label{example}
\end{center}
\end{figure}

\begin{example}\label{example-matrix}
Let $\Omega$ be a domain  inside a minimal foam ($h=0$) such that the  matrix of distances of the vertices $X=\{A,B,C\}$ of $\Omega$ is
$$
\mathcal{D}_\Omega(X)=\begin{pmatrix}
0&1&2&4\\
1&0&3&4\\
2&3&0&4\\
4&4&4&0
\end{pmatrix}
$$
The following graph can be useful in order to represent the distances between vertices and the distances between vertices and the boundary $\partial \Omega$,
\begin{center}
\resizebox{80pt}{80pt}{
\begin{tikzpicture}[scale=0.2,>=stealth',shorten >=1pt,node distance=3cm,on grid,initial/.style    ={}]
  \node[state]          (B)     {$B$};
  \node[state]          (M) [below left =of B]    {$A$};
  \node[state]          (D) [above  =of B]    {$\partial \Omega$};
  \node[state]          (C) [below right =of B]    {$C$};
\tikzset{mystyle/.style={-,double=orange}}
\tikzset{every node/.style={fill=white}}
\path (C)     edge [mystyle]    node   {$3$} (B)
      (B)     edge [mystyle]   node   {$1$} (M)
      (C)     edge [mystyle]   node   {$2$} (M);
\tikzset{mystyle/.style={-,double=green}}
\path (C)     edge [mystyle]   node   {$4$} (D)
      (D)     edge [mystyle]    node   {$4$} (B)
      (D)     edge [mystyle]   node   {$4$} (M);
\end{tikzpicture}
}
\end{center}

\noindent Then, the reduced matrix of $X=\{A,B,C\}$ is just
$$
d_\Omega(X)=\begin{pmatrix}
0&1&2\\
1&0&3\\
2&3&0
\end{pmatrix}
$$
and the domain $D_\Omega(X)\cap B^\infty_{r_{\rm max}}$ is the domain bounded (see figure \ref{example}) by the  following inequalities $M . r \leq b$
\begin{equation}\label{inequality}
\left\{\begin{array}{ccc}
r_1+r_2&\leq& 1,\\
r_1+r_3&\leq& 2,\\
r_2+r_3&\leq& 3,\\
-r_1&\leq&0,\\
-r_2&\leq&0,\\
-r_3&\leq&0.
\end{array}
\right.
\end{equation}
where $M$ is a matrix of zeros and ones relating the different radii among themselves and with the boundary  $\partial \Omega$ and $b$ is the vector of distances.
The vertices of the polytope $D_\Omega(X)\cap B^\infty_{r_{\rm max}}$ are
$$
\left\{(0,0,0),(1,0,0),(0,1,0),(0,1,2),(0,1,1),(1,0,1)    \right\}.
$$
It is not hard to see that
$$
\eva(X,\Omega,0)=(0^2+1^2+2^2)\theta_v\pi=5\theta_v\pi.
$$
If we choose $Z_1=\{B,C\}$, then
$$
\mathcal{D}_\Omega(Z_1)=\begin{pmatrix}
0&3&4\\
3&0&4\\
4&4&0\\
\end{pmatrix}
$$
with
$$
d_\Omega(Z_1)=\begin{pmatrix}
0&3\\
3&0
\end{pmatrix}
$$
and $\eva(Z_1,\Omega,0)=9\theta_v\pi$. But we can check that
$$
\evA(X,\Omega,0)=\eva(\{A\},\Omega,0)=\eva(\{B\},\Omega,0)=\eva(\{C\},\Omega,0)=16\theta_v\pi.
$$
\end{example}

\subsubsection{Numerical solution of \textbf{evA}}\label{num-alg}
Consider for example, that we have a domain $\partial \Omega$ with shape of sphere and radius $R =10$. In this domain we introduce eight points with random distribution forming the vertices of the foam. In Figure \ref{evA} (a), we show the maximum value of $eva$ found $\eva(X,\Omega,0)=29.0257 \theta_v\pi$  from the set of inequalities $M.r \leq b$ as it appears in the expression \ref{inequality}. To solve the problem, the Interior-Point algorithm has been used with the Optimization Tool in MATLAB.

On the other hand, we can define a domain $D_\Omega(X)\cap B^\infty_{r_{\rm max}}$ in the radii dual space corresponding to all the solutions that can be found from the set of inequalities. The set of vertices $V_i$ of this domain form a polytope in which one of the vertices maximizes $eva$. Applying the algorithm proposed in Algorithm \ref {algorithm 1}, it is possible to estimate a solution for $evA$ with the minimum of vertices needed (see Figure \ref{evA} (b)). The algorithm is based in the algorithm proposed in \cite{Kleder} that convert convex constraint inequalities into a set of vertices. In this example, only three of the initial vertices survive in the final solution  $\evA(X,\Omega,0)=36.4422 \theta_v\pi$, while the rest of vertices are absorbed.

\begin{algorithm}[H]
\caption{\emph{evA} calculation}\label{algorithm 1}
\begin{algorithmic}[1]
\BState \textbf{Input:}
\State $X$: set of $N$-isolated points into $\Omega\subset\erre^3$ with boundary $\partial \Omega$.
\BState \textbf{Output:}
\State $evA = 0$.
\BState \emph{loop}:
\State Estimate the distance matrix $D_\Omega(X)$ in $X$ with boundary $\partial \Omega$.
\State Build the set of inequalities $M.r \leq b$ (see \ref{inequality}).
\State Make an initial estimation of the radii $r=(r_1,\ldots,r_N)$ that maximize $eva$.
\State Re-estimated the distance vector $b = b - M.r$.
\State Calculate the normalized matrix $D_{ij} = M_{ij}/b_i$ by the distance vector $b$.
\State Converts the polytope (convex hull) defined by the system of inequalities $D*r\leq b$, in a list a vertices $V$ in the radii dual space, where the domain $D_\Omega(X)\cap B^\infty_{r_{\rm max}}$ is the bounded domain.
\State Find the vertex $V_i=(\acute{r}_1,\ldots,\acute{r}_N)$ in the polytope that maximize $eva$.
\If {$evA > eva$}
\State \Return \emph{break loop}.
\EndIf
\State Assign the value $evA = eva$.
\State Find the number $L$ of collapsed radii, such that $\acute{r}_i = 0$.
\If {$L > 0$}
\State Remove the $L$ vertices with radius $r_i = 0$ and $N \gets N-L$.
\BState \textbf{else}
\State Remove the vertex with minimum radius and $N \gets N-1$.
\EndIf
\BState \emph{end loop}
\end{algorithmic}
\end{algorithm}

\begin{figure}
\centerline{
\includegraphics[scale=0.25]{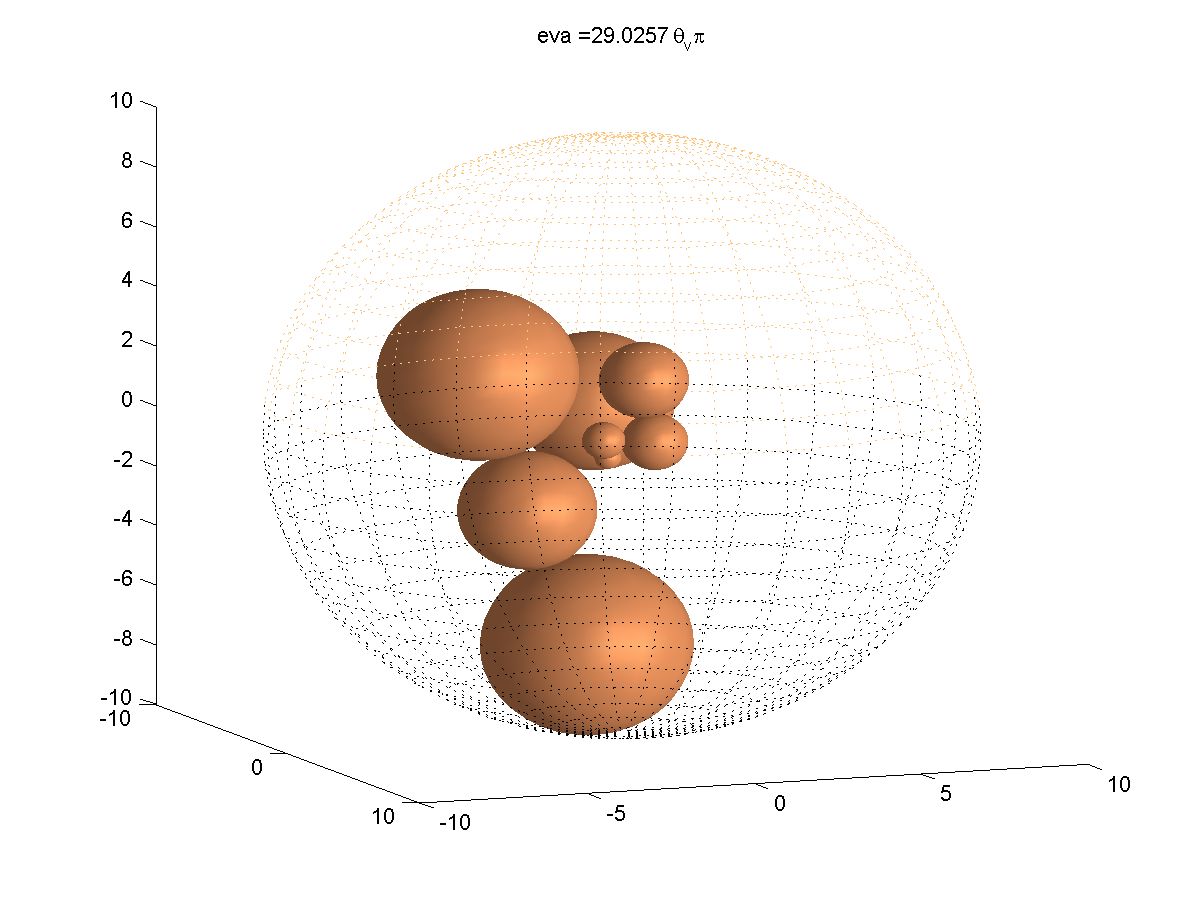} \includegraphics[scale=0.25]{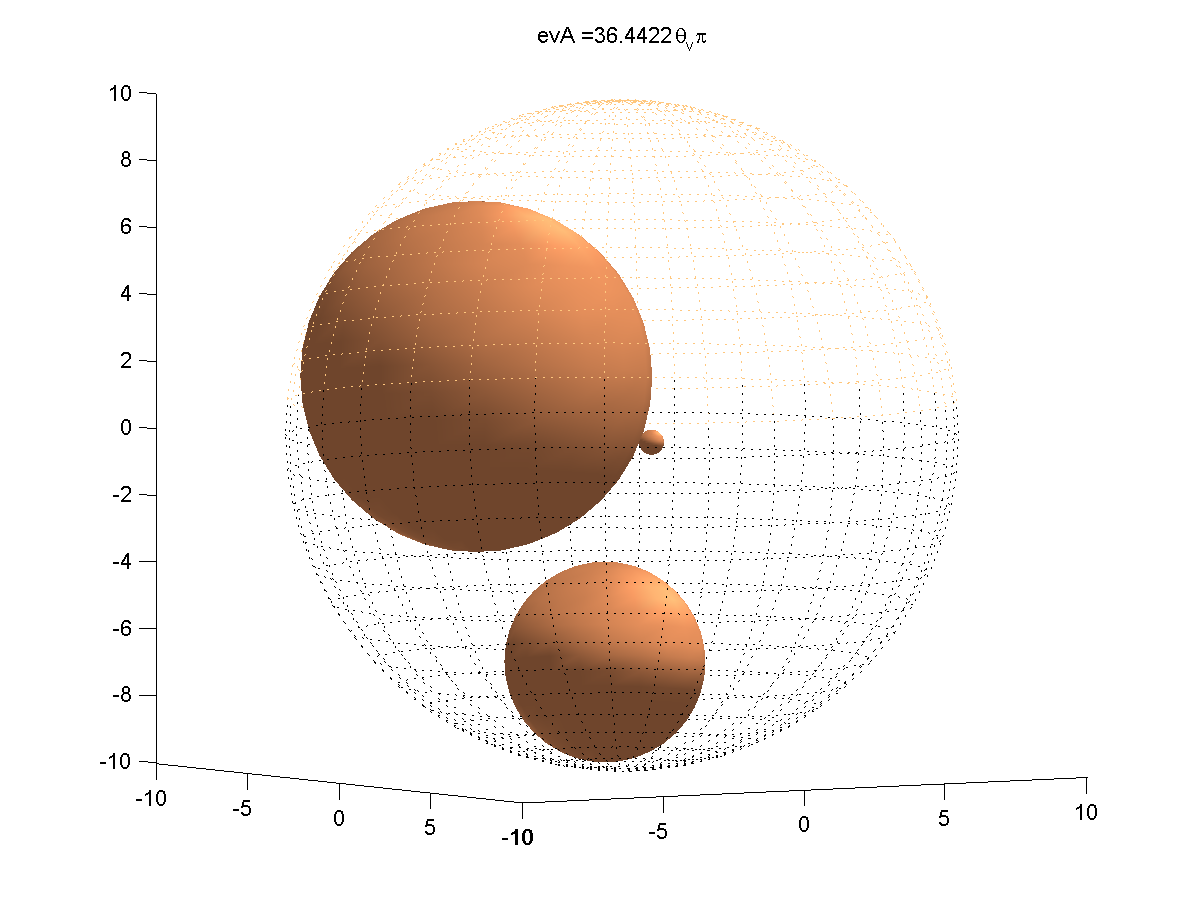}
}
\begin{center}
\caption{\small{(a) We show the solution that maximizes $eva$ for the case of eight vertices inside the sphere domain $\partial \Omega$. (b) We show the distribution of spheres that maximizes $evA$}}\label{evA}
\end{center}
\end{figure}











\def\cprime{$'$} \def\cprime{$'$} \def\cprime{$'$} \def\cprime{$'$}
  \def\cprime{$'$}
\providecommand{\bysame}{\leavevmode\hbox to3em{\hrulefill}\thinspace}
\providecommand{\MR}{\relax\ifhmode\unskip\space\fi MR }
\providecommand{\MRhref}[2]{%
  \href{http://www.ams.org/mathscinet-getitem?mr=#1}{#2}
}
\providecommand{\href}[2]{#2}

\bibliographystyle{siam}

\end{document}